\setlist[enumerate]{leftmargin=.5in}
\setlist[itemize]{leftmargin=.5in}
\newcommand{\E}{\mathbb{E}}
\newtheorem{assumption}{Assumption}
\crefname{assumption}{assumption}{assumptions}
\Crefname{assumption}{Assumption}{Assumptions}
\newtheorem*{notation*}{Notation}
\newcommand{\rv}[2][final]{%
  \ifthenelse{\equal{#1}{revision}}%
    {\textcolor{blue}{#2}}
    {#2}
}
\begin{document}
\title{Bilevel Learning with Inexact Stochastic Gradients}
%
%

\author{Mohammad Sadegh Salehi\inst{1}\textsuperscript{(\Envelope)}\orcidID{0009−0002−1814-5024} \and
Subhadip Mukherjee\inst{2}\orcidID{0000-0002-7957-8758} \and
Lindon Roberts\inst{3}\orcidID{0000-0001-6438-9703} \and Matthias J. Ehrhardt \inst{1}\orcidID{0000-0001-8523-353X}}
\authorrunning{M. S. Salehi et al.}
%
\institute{Department of Mathematical Sciences, University of Bath, Bath, BA2 7AY, UK \and
Department of Electronics \& Electrical Communication Engineering, Indian Institute of Technology (IIT) Kharagpur, India\\
\and
School of Mathematics and Statistics, University of Sydney, Camperdown NSW 2006, Australia\\
}
%
\maketitle              
\begin{abstract}
Bilevel learning has gained prominence in machine learning, inverse problems, and imaging applications, including hyperparameter optimization, learning data-adaptive regularizers, and optimizing forward operators. The large-scale nature of these problems has led to the development of inexact and computationally efficient methods. Existing adaptive methods predominantly rely on deterministic formulations, while stochastic approaches often adopt a doubly-stochastic framework with impractical variance assumptions, enforces a fixed number of lower-level iterations, and requires extensive tuning.
In this work, we focus on bilevel learning with strongly convex lower-level problems and a nonconvex sum-of-functions in the upper-level. Stochasticity arises from data sampling in the upper-level 
which leads to inexact stochastic hypergradients.
We establish their connection to state-of-the-art stochastic optimization theory for nonconvex objectives. 
Furthermore, we prove the convergence of inexact stochastic bilevel optimization under mild assumptions. 
Our empirical results highlight significant speed-ups and improved generalization in imaging tasks such as image denoising and deblurring in comparison with adaptive deterministic bilevel methods.      

\keywords{Bilevel Learning  \and Stochastic Bilevel Optimization \and Learning Regularizers \and Machine Learning \and Variational regularization.}
\end{abstract}

\section{Introduction}
Bilevel learning has emerged as a powerful framework in various machine learning domains such as hyperparameter optimization \cite{Pedregosa1602, grazzi2021convergence} and meta-learning \cite{Meta_learning-franceschi18a}. In inverse problems and imaging applications, bilevel optimization plays a key role within variational regularization frameworks \cite{Holler_2018, Kunisch2013ABO, Reyes2023}. In this work, following the convention from \cite{SGD_better}, considering a sampling vector \( v \in \mathbb{R}^m \) such that \( v \sim \mathcal{D} \) and \( \mathbb{E}_\mathcal{D}[v_i] = 1 \) for all $i\in \{1,\dots,m \}$, where \( \mathcal{D} \) is a user-defined distribution (for example, the uniform distribution \(\mathcal{D} = U(0,m)\)), we focus on the bilevel optimization problem described as follows:
\begin{subequations}\label{upper_stochastic}
\begin{align}
    \min_\theta \mathbb{E}_{v \sim \mathcal{D}}\Big[f_v(\theta) &\coloneqq \frac{1}{m} \sum_{i=1}^{m} v_i f_i(\theta) = \frac{1}{m}\sum_{i=1}^{m}v_ig_i(\hat{x}_i(\theta)) \Big], \label{Upper_stochastic}\\
 s.t. \quad
         {\hat{x}_i(\theta)} &\coloneqq \arg\min_{x\in \mathbb{R}^n} h_i(x,\theta), \quad i = 1,2,\dots, m,  \label{lower_stochastic}  
\end{align}
\end{subequations}
Here, the upper-level problem is a stochastic optimization problem where the upper-level functions  $g_i$  measure the quality of the solution to the lower-level problem \eqref{lower_stochastic}, for example, $g_i(x)= \|x - x^*_i\|^2$, where $x^*_i$ is the desired solution of the lower-level problem. In variational regularization, the lower-level objective is often formulated as $h_i(x,\theta) = D(Ax,y_i) + R_\theta(x),$ for all $i = 1,2,\dots, m,$
where $D$ denotes the data fidelity term, each $y_i$ represents the noisy measurements, $A$  is the forward operator (often linear) for the imaging problems, and $R_\theta$ is a regularizer parametrized by $\theta$. Since the quality of reconstruction depends on the choice of $\theta$, bilevel learning can be used to optimize the parameters in $R_\theta$ \cite{Crockett_2022, ehrhardt2024optimalregularizationparametersbilevel}. Additionally, data-adaptive regularizers have shown promising results in variational regularization frameworks \cite{Yunjin_Chen_2014, InputConvex, goujon2022neuralnetworkbased}.
Moreover, components of the data fidelity term, such as the forward operator, may also depend on $\theta$. For instance, in magnetic resonance imaging (MRI), learning the sampling pattern \cite{sherry2020learning} and in seismic imaging, learning design parameters \cite{Downing_2024} directly affect the reconstruction quality.

However, large-scale bilevel optimization is challenging, particularly in imaging. Gradient-based bilevel optimization algorithms \cite{Ochs} offer scalability, but the gradient of the upper-level depends on the solution of the lower-level problem, and computing the exact solution of the lower-level variational problem is impractical. Instead, approximate gradient methods, which rely on inexact solutions of the lower-level problem, have been investigated \cite{Pedregosa1602,boundMatthias,salehi2024adaptively}. The existing imaging literature has primarily focused on methods for computing the approximate hypergradient and bounding its errors \cite{boundMatthias, piggyback_convergence, AD}, with less attention given to the behavior and convergence of the upper-level solver in the presence of such errors. Recent works have addressed these gaps by proposing adaptive bilevel methods \cite{salehi2024adaptively} or single-level reformulations \cite{suonperä2024generalsingleloopmethodsbilevel}, which reduce computational costs. However, these methods are predominantly deterministic.

In contrast, stochastic methods, particularly for the upper-level problem, can significantly accelerate computations and enable bilevel problems to be tackled on larger datasets, potentially enhancing generalization \rv{compared to methods that are limited to smaller datasets}. Most existing stochastic bilevel optimization algorithms~\cite{ghadimi2018approximation,bilevel_complexity_warmstart,ramzi2023shine} adopt a doubly-stochastic formulation, where the lower-level problem focuses on a regularized training problem
, and the upper-level problem handles hyperparameter optimization on a validation dataset. However, these methods often assume a fixed number of lower-level iterations
, while paying less attention to the effects of inexact lower-level evaluations or the choice of the upper-level step size. Moreover, due to only considering machine learning applications such as multinomial logistic regression, stochastic methods are considered in their lower-level problems~\cite{grazzi2021convergence}, whereas deterministic solvers are commonly used for lower-level problems arising in variational regularization~\cite{boundMatthias,salehi2024adaptively}.

Another class of bilevel algorithms employs the value-function approach~\cite{dempe_bilevel_2020}, reformulating bilevel problems into a single-level constrained optimization problem, referred to as fully-first-order methods~\cite{bome,fullyFirstOrderStochastic}. While these methods theoretically provide a flexible framework for the lower-level problem, they share similar assumptions regarding 
a fixed number of lower-level iterations, and convergence analysis as other stochastic bilevel methods. 

\paragraph{\textbf{Contributions:}}
In this work, we consider inexact stochastic hypergradients within the bilevel optimization framework, 
as outlined in \cite{Pedregosa1602,boundMatthias}. We establish the connection between these stochastic approximate hypergradients and the theoretical analysis of stochastic nonconvex optimization, particularly focusing on practical variance assumptions \cite{SGD_better,biasedSGD}. Specifically, we demonstrate that the inexact stochastic hypergradients satisfy the practical assumptions of biased stochastic gradient descent (SGD). Finally, we demonstrate the speed-up advantages of this framework compared to adaptive inexact deterministic bilevel methods in learning data-adaptive regularizers for image denoising. Additionally, we show that this approach leads to improved generalization when applied to learning data-adaptive regularizers for image deblurring problems.
\section{The Algorithm}

Considering the upper-level problem \eqref{Upper_stochastic}, for the hypergradient we can write
\begin{equation}\label{grad_upper_stochastic}
    \mathbb{E}_{v \sim \mathcal{D}}\Big[ \nabla f_v(\theta) \coloneqq \frac{1}{m} \sum_{i=1}^{m} v_i   \nabla f_i(\theta) = \frac{1}{m}\sum_{i=1}^{m}v_i \partial \hat{x}_i(\theta)^T \nabla g_i(\hat{x}_i(\theta))\Big].
\end{equation}
From the definition of the sampling vector, in case we have access to the exact lower-level solution $\hat{x}_i(\theta)$ for all $i\in \{1,2, \dots,m \}$, $f_v$ and $\nabla f_v$ are the unbiased estimators of $f$ and $\nabla f$, respectively (i.e. $\mathbb{E}_{v \sim \mathcal{D}} [f_v(\theta)] = f(\theta)$ and $\mathbb{E}_{v \sim \mathcal{D}} \nabla f_v(\theta) = \nabla f(\theta)$).
Each iteration $k \in \mathbb{N}$ of SGD takes the following form
\begin{equation}\label{iSGD_update}
    \theta^{k+1} = \theta^k - \alpha_k \nabla f_{v^k}(\theta^k),
\end{equation}
where $v^k \sim \mathcal{D}$ i.i.d. and $\alpha_k>0$ is the step size. 

Since, in general, the lower-level solution $\hat{x}_i(\theta)$ can only be approximated, one typically has access only to the approximate lower-level solution $\tilde{x}_i(\theta)$, where $\|\hat{x}_i(\theta) - \tilde{x}_i(\theta)\|\leq \epsilon$. \rv{This is ensured by the a posteriori bound $\|\nabla h_i(\tilde{x}_i(\theta), \theta)\|\leq \mu_i \epsilon$, where $\mu_i$ is the strong convexity parameter of each $h_i$.} Hence, corresponding to each sampling vector $v_i$ for all $i\in \{1,2, \dots,m \}$, the stochastic hypergradient $v_i\partial \hat{x}_i(\theta)^T \nabla g_i(\hat{x}_i(\theta))$ can be approximated using methods described in \cite{boundMatthias, AD} with an error of $\mathcal{O}(\epsilon)$. This results in an inexact stochastic hypergradient in the upper-level problem, which we denote by
 $z_v(\theta) = \nabla f_v(\theta) + e_v(\theta)$ for all $v \sim \mathcal{D}$. Note that ensuring the boundedness of this error can be done utilizing the error bounds provided in \cite{boundMatthias,grazzi20a}.  The steps of the inexact SGD algorithm are outlined in \cref{alg:isgd}. 
\begin{algorithm}
\caption{Inexact Stochastic Gradient Descent (ISGD)}
\label{alg:isgd}
\begin{algorithmic}[1]
\State Input: Initial parameters $\theta^0 \in \mathbb{R}^d$, Sampling $\mathcal{D}$, step size $\alpha>0$, accuracy $\epsilon\geq 0$.
\For{$k = 0,1, \dots$}
    \State{Sample a mini-batch $v^k \sim \mathcal{D}$.}
    \State{Compute the approximate stochastic gradient $z_{v^k}(\theta^k)$ with error $\mathcal{O}(\epsilon)$.}
    \State{Update the iterate: $\theta^{k+1} = \theta^k - \alpha  z_{v^k}(\theta^k)$}\label{update_step}
\EndFor
\end{algorithmic}
\end{algorithm}
\subsection{Convergence Analysis}
To prove convergence of Algorithm \ref{alg:isgd}, we consider the notations below and make the following regularity assumptions.
\begin{notation*}For any $v \sim \mathcal{D}$ and for all \(\theta \in \mathbb{R}^d\),  the error of the hypergradient is shown as $\|e_v(\theta)\|$. Moreover, at each iteration $k \in \mathbb{N}$ of \cref{alg:isgd}, we denote $z_k \coloneqq z_{v^k}(\theta^k)$, and $\epsilon_k^2 = \E_{v^k\sim \mathcal{D}} [\|e_{v^k}(\theta^k)\|^2]$.
\end{notation*}
\begin{assumption}\label{assumption1} Each $f_i$ is $L_{\nabla f_i}$-smooth, which means $f$ and the $f_i$'s are continuously differentiable with $L_{\nabla f}$ and $L_{\nabla f_i}$ Lipschitz gradients, respectively. Moreover, each $f_i$ is bounded below.
\end{assumption}


\begin{theorem}\label{convergence}
    \textbf{Convergence:} Let \cref{assumption1} hold \rv{and $\E[\epsilon_k^2]$ be bounded above for all $k\geq 0$}. Let $\delta_0 \overset{\text{def}}{=} f(\theta^0) - f^*$. There exist constants $c_1,c_2,\dots,c_5>0$ such that if the step size satisfies $0 < \alpha \leq \frac{c_1}{L_{\nabla f}}$, then the iterates $\{\theta^t\}_{t \geq 0}$ of 
    \cref{alg:isgd} (step \ref{update_step}) satisfy
\begin{align*}
\min_{0 \leq t \leq T-1} \mathbb{E} \left [\|\nabla f(\theta^t)\|^2 \right ] \leq \frac{ c_2 \rv{(1+ c_3 L_{\nabla f} \alpha^2)^T}}{\alpha T} \delta_0 + c_4L_{\nabla f}\alpha + c_5.
\end{align*}
\end{theorem}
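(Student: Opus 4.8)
The plan is to cast \cref{alg:isgd} as biased SGD on the nonconvex objective $f$ whose estimator $z_k$ obeys an expected-smoothness (ABC-type) second-moment bound, and then run a descent-lemma argument. First I would record three preliminary facts. (i) Splitting $z_k=\nabla f_{v^k}(\theta^k)+e_{v^k}(\theta^k)$ and using $\|a+b\|^2\le 2\|a\|^2+2\|b\|^2$ gives $\E_{v^k}[\|z_k\|^2\mid\theta^k]\le 2\,\E_{v^k}[\|\nabla f_{v^k}(\theta^k)\|^2\mid\theta^k]+2\epsilon_k^2$. (ii) Since each $f_i$ is $L_{\nabla f_i}$-smooth and bounded below, $\|\nabla f_i(\theta)\|^2\le 2L_{\nabla f_i}(f_i(\theta)-f_i^\ast)$, and averaging over the sampling $\mathcal{D}$ (with $\E_{\mathcal{D}}[v_i]=1$) yields $\E_{v}[\|\nabla f_v(\theta)\|^2]\le 2A\,(f(\theta)-f^\ast)+B\,\|\nabla f(\theta)\|^2+C$ for constants $A,B,C\ge 0$ fixed by the $L_{\nabla f_i}$ and the lower bounds; this is the step that connects the inexact stochastic hypergradient to the practical-variance assumptions of biased SGD. (iii) By unbiasedness of $\nabla f_v$ and Jensen's inequality, $\E_{v^k}[z_k\mid\theta^k]=\nabla f(\theta^k)+b_k$ with $\|b_k\|^2\le\epsilon_k^2$.

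Next I would apply $L_{\nabla f}$-smoothness (write $L:=L_{\nabla f}$) to the update $\theta^{k+1}=\theta^k-\alpha z_k$ to obtain $f(\theta^{k+1})\le f(\theta^k)-\alpha\langle\nabla f(\theta^k),z_k\rangle+\tfrac{L\alpha^2}{2}\|z_k\|^2$. Taking $\E_{v^k}[\,\cdot\mid\theta^k]$, using (iii) for the inner product together with Young's inequality on the bias cross-term, $-\alpha\langle\nabla f(\theta^k),b_k\rangle\le\tfrac{\alpha}{2}\|\nabla f(\theta^k)\|^2+\tfrac{\alpha}{2}\epsilon_k^2$, and using (i)--(ii) on the quadratic term, I would collect the $\|\nabla f(\theta^k)\|^2$ coefficients. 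With $\delta_k:=f(\theta^k)-f^\ast$ this produces, after the tower property, the one-step recursion
\begin{equation*}
\E[\delta_{k+1}]\;\le\;\rho\,\E[\delta_k]-\gamma\,\E[\|\nabla f(\theta^k)\|^2]+R_k,
\end{equation*}
where $\rho=1+2LA\alpha^2$, $\gamma=\tfrac{\alpha}{2}-LB\alpha^2$, and $R_k=\big(\tfrac{\alpha}{2}+L\alpha^2\big)\E[\epsilon_k^2]+L\alpha^2 C$. The step-size condition $0<\alpha\le c_1/L$ (e.g. $c_1=\tfrac{1}{4B}$) is exactly what guarantees $\gamma\ge\tfrac{\alpha}{4}>0$, and the hypothesis that $\E[\epsilon_k^2]$ is bounded above gives $R_k\le\bar R$ uniformly in $k$.

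The main obstacle is that $\rho>1$ (the $A$-term, coming from the finite-sum/bounded-below structure) blocks naive telescoping: summing the recursion directly would amplify the noise floor by $\rho^T$. The remedy, which is precisely what confines the exponential to the $\delta_0$ term, is a geometrically weighted telescoping. I would rearrange to $\gamma\,\E[\|\nabla f(\theta^k)\|^2]\le\rho\,\E[\delta_k]-\E[\delta_{k+1}]+R_k$, multiply by $\rho^{-(k+1)}$, and sum over $k=0,\dots,T-1$ so the $\delta$-terms telescope and $\rho^{-T}\E[\delta_T]\ge 0$ is discarded. Writing $W=\sum_{k=0}^{T-1}\rho^{-(k+1)}$ and lower-bounding the weighted gradient sum by $W\min_{0\le t\le T-1}\E[\|\nabla f(\theta^t)\|^2]$ gives $\gamma W\min_t\E[\|\nabla f(\theta^t)\|^2]\le\delta_0+\sum_k\rho^{-(k+1)}R_k\le\delta_0+\bar R\,W$. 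Dividing by $\gamma W$, the noise contribution collapses to $\bar R/\gamma$ with no exponential, while the initial-gap term is $\delta_0/(\gamma W)$; using $1/W=\rho^{T}/\sum_{j=0}^{T-1}\rho^{j}\le\rho^{T}/T$ together with $\gamma\ge\alpha/4$ yields $\delta_0/(\gamma W)\le\tfrac{4\rho^{T}}{\alpha T}\delta_0$. Matching terms then gives the claim with $c_3=2A$, $c_2=\Theta(1)$, and $c_4,c_5=\Theta\big(C+\sup_k\E[\epsilon_k^2]\big)$ after expanding $\bar R/\gamma$. I expect the genuinely delicate points to be (a) justifying the ABC constants $A,B,C$ from \cref{assumption1} and the sampling model, and (b) the geometric weighting in step (iii)'s unrolling, which is what keeps the $(1+c_3 L_{\nabla f}\alpha^2)^T$ blow-up attached to the initial-gap term alone.
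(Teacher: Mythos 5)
Your proof is correct, but it takes a genuinely different route from the paper. The paper's proof is citation-based: it verifies (via \cref{Young'sLemma}, \cref{second_moment_connect}, and \cref{ES_IBES}) that the inexact stochastic hypergradient satisfies the biased ABC condition of \cite{biasedSGD}, and then invokes \cite[Theorem 3]{biasedSGD} as a black box, reading off $c_1,\dots,c_5$ from $b(\zeta)$, $\tilde{A}(\eta)$, $\tilde{B}(\eta)$, $\tilde{C}(\eta,\epsilon^2(\theta))$. You instead re-derive the inner mechanism: your preliminary facts (i) and (iii) are exactly the paper's \cref{second_moment_connect} and \cref{Young'sLemma} specialized to $\eta=1$ and $\zeta=1$ (so $b=\tfrac12$, consistent with the paper's requirement $\zeta>\tfrac12$), your (ii) is the content of \cref{lemma_ABC} --- note that your averaging argument tacitly needs $\mathbb{E}[v_i^2]<\infty$, an assumption the paper also adds in \cref{ES_IBES} but omits from the statement of \cref{convergence}, so this is not a gap relative to the paper --- and your descent-lemma recursion $\mathbb{E}[\delta_{k+1}]\le\rho\,\mathbb{E}[\delta_k]-\gamma\,\mathbb{E}[\|\nabla f(\theta^k)\|^2]+R_k$ with the $\rho^{-(k+1)}$-weighted telescoping is precisely what the cited theorem does internally; your computation of $\rho=1+2L A\alpha^2$, $\gamma\ge\alpha/4$ under $\alpha\le\tfrac{1}{4LB}$, and $1/W\le\rho^T/T$ checks out and correctly explains why the factor $(1+c_3L_{\nabla f}\alpha^2)^T$ attaches only to the $\delta_0$ term. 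What each approach buys: the paper's version is shorter, keeps tunable $(\zeta,\eta)$, and inherits any refinements of \cite{biasedSGD,SGD_better}; yours is self-contained and arguably cleaner on one point --- your constants $c_4,c_5$ depend on $\sup_k\mathbb{E}[\epsilon_k^2]$, which matches the theorem's stated hypothesis directly, whereas the paper's constants involve a bound $\tau$ on $\sum_{k=0}^{T}\mathbb{E}[\epsilon_k^2]$, which under mere per-iteration boundedness grows with $T$ and sits less comfortably with constants that should be $T$-independent.
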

\begin{corollary}\label{cor:1}
    For \rv{any} fixed accuracy $\delta>0$, there exists \rv{$\alpha_\delta>0$} and $T_\delta \in \mathbb{N}$, such that for all $T\geq T_\delta$ we have
    \begin{equation}\label{converge_bound}
\min_{0 \leq t \leq T-1} \mathbb{E} \left [\|\nabla f(\theta^t)\|^2 \right ] \leq \mathcal{O}(\delta).
\end{equation}
\rv{Moreover, $T_\delta = \mathcal{O}(\delta^{-2})$ and $\alpha_\delta = \mathcal{O}(\delta)$.}
\end{corollary}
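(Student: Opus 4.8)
The plan is to run the standard nonconvex smoothness-based descent analysis, but tracking the bias $e_v$ and the multiplicative second-moment structure that produces the geometric factor $(1+c_3 L_{\nabla f}\alpha^2)^T$. First I would apply the $L_{\nabla f}$-smoothness descent inequality to consecutive iterates and substitute the update $\theta^{k+1}-\theta^k = -\alpha z_k$, giving $f(\theta^{k+1}) \le f(\theta^k) - \alpha\langle \nabla f(\theta^k), z_k\rangle + \tfrac{L_{\nabla f}\alpha^2}{2}\|z_k\|^2$. Taking the conditional expectation over $v^k$ and writing $z_k = \nabla f_{v^k}(\theta^k)+e_{v^k}(\theta^k)$, the unbiasedness $\E_{v^k}[\nabla f_{v^k}(\theta^k)] = \nabla f(\theta^k)$ turns the inner-product term into $-\alpha\|\nabla f(\theta^k)\|^2 - \alpha\langle \nabla f(\theta^k), \E_{v^k}[e_{v^k}(\theta^k)]\rangle$.

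Two estimates then drive the proof. For the bias term, Cauchy--Schwarz and Jensen give $\|\E_{v^k}[e_{v^k}(\theta^k)]\| \le \epsilon_k$, so by Young's inequality $-\alpha\langle\nabla f(\theta^k), \E_{v^k}[e_{v^k}(\theta^k)]\rangle \le \tfrac\alpha2\|\nabla f(\theta^k)\|^2 + \tfrac\alpha2\epsilon_k^2$, where the first piece is absorbed into the descent term and the second becomes an error contribution. For the second moment, I would establish an ABC-type bound $\E_{v^k}[\|z_k\|^2] \le 2A(f(\theta^k)-f^*) + B\|\nabla f(\theta^k)\|^2 + C + D\epsilon_k^2$, connecting to the practical variance assumptions of \cite{SGD_better,biasedSGD}. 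The crucial ingredient --- and the only place where the ``bounded below'' hypothesis of \cref{assumption1} is used --- is that any $L_{\nabla f_i}$-smooth $f_i$ bounded below satisfies $\|\nabla f_i(\theta)\|^2 \le 2L_{\nabla f_i}(f_i(\theta)-\inf f_i)$; averaging over the sampling distribution produces the $A(f(\theta)-f^*)$ term (with $C$ absorbing $f^*-\tfrac1m\sum_i\inf f_i \ge 0$), while $\|z_k\|^2 \le 2\|\nabla f_{v^k}\|^2 + 2\|e_{v^k}\|^2$ yields the $D\epsilon_k^2$ term.

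Combining these and taking total expectations with $\delta_k := \E[f(\theta^k)]-f^*$, I obtain the one-step recursion $\delta_{k+1} \le (1+c_3 L_{\nabla f}\alpha^2)\delta_k - c\alpha\,\E[\|\nabla f(\theta^k)\|^2] + b$, where $b$ collects the $\alpha$- and $\alpha^2$-order terms in $\sup_k\E[\epsilon_k^2]$ together with the constant $C$. The step-size restriction $\alpha \le c_1/L_{\nabla f}$ is precisely what keeps the net coefficient $c$ of $\E[\|\nabla f(\theta^k)\|^2]$ strictly positive after the $\tfrac\alpha2\|\nabla f(\theta^k)\|^2$ term (from the bias) and the $\tfrac{L_{\nabla f}\alpha^2}{2}B\|\nabla f(\theta^k)\|^2$ term (from the second moment) are subtracted.

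Finally I would isolate $c\alpha\,\E[\|\nabla f(\theta^k)\|^2]$ and telescope the geometric recursion from $k=0$ to $T-1$; unrolling $\delta_{k+1}\le(1+c_3 L_{\nabla f}\alpha^2)\delta_k + b$ bounds $\sum_k\delta_k$ by a geometric series, which is what injects the factor $(1+c_3 L_{\nabla f}\alpha^2)^T$ multiplying $\delta_0$ after dividing by $c\alpha T$ and using $\min_t \le \text{average}$ together with $\delta_T\ge0$. The main obstacle is exactly this last step: because the ABC bound makes the suboptimality recursion multiplicative (growing) rather than contractive, the telescoping does not collapse cleanly, and one must carefully separate the dominant $\delta_0$ term (which carries the geometric factor) from the variance term $c_4 L_{\nabla f}\alpha$ and the irreducible error floor $c_5 \propto \sup_k\E[\epsilon_k^2]$ --- the boundedness of $\E[\epsilon_k^2]$ being essential to make $c_5$ finite. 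This structure is consistent with \cref{cor:1}: choosing $\alpha=\mathcal{O}(\delta)$ and $T=\mathcal{O}(\delta^{-2})$ keeps $(1+c_3 L_{\nabla f}\alpha^2)^T = (1+\mathcal{O}(\delta^2))^{\mathcal{O}(\delta^{-2})}$ bounded, so the leading term is driven to $\mathcal{O}(\delta)$.
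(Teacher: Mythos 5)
Your reconstruction of the machinery behind \cref{convergence} (smoothness descent, the Young-parameter split of the bias term, the ABC-type second-moment bound via $\|\nabla f_i(\theta)\|^2 \le 2L_{\nabla f_i}(f_i(\theta)-\inf f_i)$, and the weighted telescoping of the growing recursion) is sound and is essentially the argument the paper imports wholesale by verifying the biased ABC condition in \cref{ES_IBES} and then citing \cite{biasedSGD} as a black box. However, as a proof of \cref{cor:1} specifically, your argument has a genuine gap: the error floor $c_5$ is never made small. In the bound $\min_{0\le t\le T-1}\E[\|\nabla f(\theta^t)\|^2] \le \frac{c_2(1+c_3L_{\nabla f}\alpha^2)^T}{\alpha T}\delta_0 + c_4L_{\nabla f}\alpha + c_5$, your choices $\alpha_\delta=\mathcal{O}(\delta)$ and $T_\delta=\mathcal{O}(\delta^{-2})$ do drive the first two terms to $\mathcal{O}(\delta)$, exactly as you argue, but $c_5\propto \sup_k\E[\epsilon_k^2]$ is a constant that depends on neither $\alpha$ nor $T$. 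If the hypergradient errors are merely ``bounded,'' the conclusion is $\mathcal{O}(\delta)+c_5$, which does not establish \eqref{converge_bound} as $\delta\to 0$. The missing idea is to couple the lower-level accuracy --- an input of \cref{alg:isgd} that controls $\E[\epsilon_k^2]$ through the $\mathcal{O}(\epsilon)$ hypergradient error bounds --- to the target accuracy $\delta$ itself.

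The paper's proof does exactly this coupling: it defines $T(\E[\epsilon_T^2])$ and $\alpha(\E[\epsilon_T^2])$ as explicit functions of the error level (note that the third entry of the min defining $\alpha$ and the corresponding entry of the max defining $T$ both involve $\E[\epsilon_T^2]$, so the step size and horizon are tuned to the error floor, not just to $\delta_0$ and $L_{\nabla f}$), derives $\min_{0\le t\le T-1}\E[\|\nabla f(\theta^t)\|^2] \le \frac{3\zeta\,\E[\epsilon_T^2]}{2b(\zeta)}$ for all $T\ge T(\E[\epsilon_T^2])$, and only then substitutes $\delta=\E[\epsilon_T^2]$, setting $T_\delta=T(\delta)$ and $\alpha_\delta=\alpha(\delta)$. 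With this identification the floor is $\mathcal{O}(\delta)$ by construction, and the claimed rates $T_\delta=\mathcal{O}(\delta^{-2})$ (since $T(\delta)$ has a $\delta^{-1}\cdot\delta^{-1}$ dominant term) and $\alpha_\delta=\mathcal{O}(\delta)$ (from $\alpha\le 1/\sqrt{L_{\nabla f}\tilde{A}(\eta)T}$) fall out of those formulas. To repair your proof, you would state that for a given $\delta$ the algorithm is run with lower-level tolerance chosen so that $\E[\epsilon_k^2]\le\delta$, and let your $c_5$ (and the error contribution inside $c_4$) inherit this $\delta$-dependence; your asymptotic bookkeeping for the remaining terms is otherwise correct.
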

Recent stochastic optimization analyses for nonconvex objectives have highlighted a more general, flexible, and practical bounded variance assumption, introducing the ABC assumption~\cite{gower2019sgd,SGD_better,biasedSGD}. However, in the stochastic bilevel framework, the role of inexact stochastic hypergradients and their connection to this framework remains unexplored. We establish this connection by stating the following lemma.
\begin{lemma}\label{lemma_ABC}
     \cite[Proposition 2]{SGD_better} Expected Smoothness (ES): Let Assumption \ref{assumption1} hold and  $\mathbb{E}[v_i^2]<\infty$ for $1\leq i\leq m$. There exist constants $A, B, C\geq 0$ such that for all $\theta \in \mathbb{R}^d$, the stochastic gradient satisfies 
     \begin{equation*}
         \mathbb{E}_{v \sim \mathcal{D}}[ \|\nabla f_{v}(\theta)\|^2] \leq 2A ( f(\theta) - f^{\text{inf}} )+ B \|\nabla f(\theta)\|^2 + C.
     \end{equation*}
\end{lemma}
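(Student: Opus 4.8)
The plan is to reduce the bound to a per-sample ``gradient--value'' inequality and then average it against the second moments of the sampling weights. First I would recall that any $L_{\nabla f_i}$-smooth function bounded below satisfies the standard descent-lemma consequence
\[
\|\nabla f_i(\theta)\|^2 \le 2 L_{\nabla f_i}\bigl(f_i(\theta) - f_i^{\text{inf}}\bigr),
\]
where $f_i^{\text{inf}} \coloneqq \inf_\theta f_i(\theta) > -\infty$ by \cref{assumption1}. This follows by applying the descent lemma at the point $\theta - L_{\nabla f_i}^{-1}\nabla f_i(\theta)$ and using $f_i(\cdot)\ge f_i^{\text{inf}}$. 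This is the inequality that converts squared gradient norms into function-value gaps, which is precisely the form the ES/ABC bound demands.

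Next I would expand the stochastic gradient. Writing $\nabla f_v(\theta) = \frac{1}{m}\sum_{i=1}^m v_i \nabla f_i(\theta)$ and applying the elementary inequality $\|\sum_{i=1}^m a_i\|^2 \le m \sum_{i=1}^m \|a_i\|^2$ (Cauchy--Schwarz/Jensen) with $a_i = v_i \nabla f_i(\theta)$ gives $\|\nabla f_v(\theta)\|^2 \le \frac{1}{m}\sum_{i=1}^m v_i^2 \|\nabla f_i(\theta)\|^2$. Taking $\E_{v\sim\mathcal{D}}$ and using the hypothesis $\E[v_i^2]<\infty$ yields
\[
\E_{v\sim\mathcal{D}}[\|\nabla f_v(\theta)\|^2] \le \frac{1}{m}\sum_{i=1}^m \E[v_i^2]\,\|\nabla f_i(\theta)\|^2,
\]
into which I substitute the gradient--value inequality from the first step.

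Finally I would collapse the per-component gaps into a single gap against $f$. Setting $M \coloneqq \max_{1\le i\le m}\E[v_i^2]\,L_{\nabla f_i}$, the previous display is at most $\frac{2M}{m}\sum_i (f_i(\theta) - f_i^{\text{inf}}) = 2M\bigl(f(\theta) - \tfrac{1}{m}\sum_i f_i^{\text{inf}}\bigr)$. The only remaining point is that $\tfrac1m\sum_i f_i^{\text{inf}}$ need not equal $f^{\text{inf}}$; but since the infimum of a sum dominates the sum of infima, $f^{\text{inf}} \ge \tfrac1m\sum_i f_i^{\text{inf}}$, and adding and subtracting $f^{\text{inf}}$ gives the claimed inequality with
\[
A = \max_{1\le i\le m}\E[v_i^2]\,L_{\nabla f_i},\qquad B = 0,\qquad C = 2A\Bigl(f^{\text{inf}} - \tfrac1m\textstyle\sum_i f_i^{\text{inf}}\Bigr) \ge 0.
\]
The main (and modest) obstacle is this last bookkeeping step: one must verify $C\ge 0$, which rests exactly on the inequality $f^{\text{inf}} \ge \tfrac1m\sum_i f_i^{\text{inf}}$; everything else is a direct chaining of smoothness with the bounded second moment of the sampling vector. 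A sharper constant $B>0$ (with a correspondingly smaller $A$) could be recovered by instead decomposing $\nabla f_v = \nabla f + \tfrac1m\sum_i(v_i-1)\nabla f_i$ and exploiting the covariance structure of $\mathcal{D}$, but the cruder route above already establishes the stated ES inequality.
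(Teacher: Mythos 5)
Your proof is correct, but note that the paper does not prove this lemma at all: it is imported verbatim by citation to \cite[Proposition 2]{SGD_better}, so you have supplied a self-contained argument where the authors rely on an external result. Your route---the descent-lemma consequence $\|\nabla f_i(\theta)\|^2 \le 2L_{\nabla f_i}(f_i(\theta)-f_i^{\text{inf}})$, the crude bound $\|\sum_i a_i\|^2 \le m\sum_i\|a_i\|^2$, and the bookkeeping step $f^{\text{inf}} \ge \frac1m\sum_i f_i^{\text{inf}}$ to guarantee $C\ge 0$---is sound at every step (the maximization over $i$ is legitimate because each gap $f_i(\theta)-f_i^{\text{inf}}$ is nonnegative, and $C$ is finite because \cref{assumption1} makes each $f_i$, hence $f$, bounded below). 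It is, however, coarser than the cited result: Khaled and Richt\'arik exploit the covariance structure of the sampling vector $v$ (e.g.\ independence or single-element sampling) to evaluate $\E_{v\sim\mathcal{D}}\|\frac1m\sum_i v_i\nabla f_i(\theta)\|^2$ more precisely, which yields sampling-dependent constants with $B>0$ and an $A$ that does not carry the worst-case factor $\max_i \E[v_i^2]L_{\nabla f_i}$ incurred by your Cauchy--Schwarz step---a refinement you correctly flag in your closing remark. Since the lemma as stated only asserts the \emph{existence} of constants $A,B,C\ge 0$, and the downstream results (\cref{ES_IBES}, \cref{convergence}) use nothing beyond existence, your more elementary argument fully suffices for the paper's purposes; what the citation buys instead is tighter constants, which matter if one wants quantitative step-size thresholds $c_1/L_{\nabla f}$ rather than merely qualitative convergence.
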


The following lemmas will be utilized in proving the convergence of ISGD. \rv{The strategy is to demonstrate the connection between the inexact stochastic gradient estimator and the biased ABC condition, enabling the use of convergence results under the biased ABC condition.}
\begin{lemma}\label{Young'sLemma}
    For all $\theta \in \mathbb{R}^d$, $v \sim \mathcal{D}$, and $\zeta>0$ we have
    \begin{equation}\label{Young's}
        |\langle \nabla f(\theta), \E_{v\sim \mathcal{D}}[e_v(\theta)]\rangle| \leq \frac{\|\nabla f(\theta)\|^2}{2\zeta} + \frac{\zeta}{2} (\E_{v \sim D}[\|e_v(\theta)\|])^2
    \end{equation}
\end{lemma}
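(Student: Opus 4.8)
The plan is to reduce the statement to a straightforward chaining of three standard inequalities, with no substantive obstacle. The target is an upper bound on the absolute value of the inner product between the deterministic vector $\nabla f(\theta)$ and the expected error $\E_{v\sim\mathcal{D}}[e_v(\theta)]$, split into one term quadratic in the gradient norm and one quadratic in the mean error norm, with the trade-off controlled by the free parameter $\zeta>0$.

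First I would apply the Cauchy--Schwarz inequality to the inner product, writing
\begin{equation*}
|\langle \nabla f(\theta), \E_{v\sim\mathcal{D}}[e_v(\theta)]\rangle| \leq \|\nabla f(\theta)\|\, \|\E_{v\sim\mathcal{D}}[e_v(\theta)]\|.
\end{equation*}
Next, since the Euclidean norm is convex, Jensen's inequality lets me move the expectation outside the norm, giving $\|\E_{v\sim\mathcal{D}}[e_v(\theta)]\| \leq \E_{v\sim\mathcal{D}}[\|e_v(\theta)\|]$. Combining these two steps yields
\begin{equation*}
|\langle \nabla f(\theta), \E_{v\sim\mathcal{D}}[e_v(\theta)]\rangle| \leq \|\nabla f(\theta)\|\, \E_{v\sim\mathcal{D}}[\|e_v(\theta)\|].
\end{equation*}

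Finally I would apply the weighted Young's inequality $ab \leq \frac{a^2}{2\zeta} + \frac{\zeta b^2}{2}$, valid for any $\zeta>0$, with the choices $a = \|\nabla f(\theta)\|$ and $b = \E_{v\sim\mathcal{D}}[\|e_v(\theta)\|]$. This splits the product exactly into the two claimed terms and completes the argument. No step presents a genuine difficulty; the only point worth flagging is the implicit requirement that $\E_{v\sim\mathcal{D}}[\|e_v(\theta)\|]$ be finite, which is guaranteed under the boundedness hypotheses on the hypergradient error already invoked in the Notation and in \cref{convergence}. The purpose of keeping $\zeta$ free is that it will later be tuned against the step size and smoothness constant when this bound is substituted into the descent inequality for \cref{alg:isgd}, which is why the lemma is stated for arbitrary $\zeta>0$ rather than an optimized choice.
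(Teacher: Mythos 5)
Your proposal is correct and follows exactly the same route as the paper's own proof: Cauchy--Schwarz on the inner product, Jensen's inequality to pull the expectation outside the norm, and then weighted Young's inequality with the free parameter $\zeta>0$. The additional remarks on finiteness of $\E_{v\sim\mathcal{D}}[\|e_v(\theta)\|]$ and the later role of $\zeta$ are sensible but do not change the argument.
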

\begin{proof}
    For all $\theta \in \mathbb{R}^d$ and $v \sim \mathcal{D}$, utilizing Cauchy--Schwarz inequality we find
    $$|\langle \nabla f(\theta), \E_{v\sim \mathcal{D}}[e_v(\theta)]\rangle| \leq \|\nabla f(\theta) \| \| \E_{v\sim \mathcal{D}}[e_v(\theta)]\|.$$
    Now, since $\|\cdot \|$ is convex, using Jensen's inequality we have
    $$\| \E_{v\sim \mathcal{D}}[e_v(\theta)]\| \leq \E_{v\sim \mathcal{D}} \left[\|e_v(\theta)\|\right].$$
Combining these inequalities we find 
$$|\langle \nabla f(\theta), \E_{v\sim \mathcal{D}}[e_v(\theta)]\rangle| \leq \|\nabla f(\theta) \| \E_{v \sim D}[\|e_v(\theta)\|].$$
Applying Young's inequality on $\|\nabla f(\theta) \| \E_{v \sim D}[\|e_v(\theta)\|]$, with a parameter $\zeta>0$ yields
$$\|\nabla f(\theta) \| \E_{v \sim D}[\|e_v(\theta)\|] \leq \frac{\|\nabla f(\theta)\|^2}{2\zeta} + \frac{\zeta}{2}(\E_{v \sim D}[\|e_v(\theta)\|])^2,$$
which implies \eqref{Young's}.
\end{proof}

The following lemma relates the second moment of the inexact stochastic gradient with the stochastic gradient.
\begin{lemma}\label{second_moment_connect}
Let $\eta>0$. The following relation between the second moment of the exact and inexact stochastic gradients hold
\begin{equation}\label{i_e_expectation}
    \E_{v \sim \mathcal{D}}[\|z_v(\theta)\|^2] \leq \frac{1+\eta}{\eta} \E_{v \sim \mathcal{D}} [\|\nabla f_v(\theta)\|^2] + (1+\eta) \E_{v \sim \mathcal{D}}[\|e_v(\theta)\|^2].
\end{equation}
\end{lemma}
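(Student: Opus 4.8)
The plan is to exploit directly the additive decomposition $z_v(\theta) = \nabla f_v(\theta) + e_v(\theta)$ supplied by the definition of the inexact stochastic hypergradient, and to control the resulting cross term with a weighted Young's inequality whose free parameter is precisely the $\eta>0$ appearing in the statement. Concretely, I would first fix $\theta \in \mathbb{R}^d$ and a realization $v \sim \mathcal{D}$, and expand the squared norm of the sum:
\begin{equation*}
\|z_v(\theta)\|^2 = \|\nabla f_v(\theta)\|^2 + 2\langle \nabla f_v(\theta), e_v(\theta)\rangle + \|e_v(\theta)\|^2.
\end{equation*}
The only term without a definite sign is the inner product, which I would bound pointwise (before taking any expectation) using the elementary inequality $2\langle a,b\rangle \leq \frac{1}{\eta}\|a\|^2 + \eta\|b\|^2$, valid for any $\eta>0$ since it is equivalent to $\big\|\tfrac{1}{\sqrt{\eta}}a - \sqrt{\eta}\,b\big\|^2 \geq 0$.

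Applying this with $a=\nabla f_v(\theta)$ and $b=e_v(\theta)$ and collecting terms gives the deterministic bound
\begin{equation*}
\|z_v(\theta)\|^2 \leq \Big(1+\tfrac{1}{\eta}\Big)\|\nabla f_v(\theta)\|^2 + (1+\eta)\|e_v(\theta)\|^2,
\end{equation*}
and noting that $1+\tfrac{1}{\eta}=\tfrac{1+\eta}{\eta}$ reproduces the constants in \eqref{i_e_expectation} exactly, with no further rescaling required. The final step is simply to take the expectation $\E_{v\sim\mathcal{D}}[\cdot]$ of this pointwise inequality and invoke linearity and monotonicity of the expectation, which preserves the inequality termwise and yields the claimed relation.

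I do not anticipate any substantive obstacle: the entire argument is one application of a parametrized Young's inequality followed by taking expectations, and the only genuine decision is to align the Young parameter with $\eta$ so that the advertised constants $\frac{1+\eta}{\eta}$ and $(1+\eta)$ emerge directly. The one point warranting a line of care is that the bound must hold for \emph{every} realization of $v$ so that the expectation can be passed through each summand separately; this is immediate, since the pointwise estimate holds identically in $v$. I would also remark that the choice of $\eta$ makes the trade-off explicit: small $\eta$ magnifies the exact-gradient second moment while damping the error contribution, a flexibility that will be useful when this lemma is combined with \cref{lemma_ABC} and \cref{Young'sLemma} to instantiate the biased ABC condition in the convergence proof.
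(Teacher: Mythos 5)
Your proposal is correct and takes essentially the same route as the paper: both expand $\|z_v(\theta)\|^2 = \|\nabla f_v(\theta)\|^2 + 2\langle \nabla f_v(\theta), e_v(\theta)\rangle + \|e_v(\theta)\|^2$ and absorb the cross term via Young's inequality with parameter $\eta$, yielding the constants $\frac{1+\eta}{\eta}$ and $(1+\eta)$. The only cosmetic difference is ordering --- the paper takes expectations first and then applies Cauchy--Schwarz followed by Young inside the expectation, whereas you establish the pointwise bound $2\langle a,b\rangle \leq \frac{1}{\eta}\|a\|^2 + \eta\|b\|^2$ and then pass to expectations, which is the same computation.
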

 \begin{proof}
     From the definition of $z_v(\theta)$, for all $\theta \in \mathbb{R}^d$, we have
     \begin{equation}\label{expanded_i}
         \E_{v \sim \mathcal{D}}[\|z_v(\theta)\|^2] = \E_{v \sim \mathcal{D}}[\|\nabla f_v(\theta)\|^2] + \E_{v \sim \mathcal{D}}[\|e_v(\theta)\|^2] + 2\E_{v \sim \mathcal{D}}[\langle \nabla f_v(\theta), e_v(\theta) \rangle].
     \end{equation}
 Now, by applying Cauchy--Schwarz and then Young's inequality on the last term above, with a parameter $\eta>0$, we get
 \begin{align*}
     2\E_{v \sim \mathcal{D}}[\langle \nabla f_v(\theta), e_v(\theta) \rangle] &\leq 2\E_{v \sim \mathcal{D}}[\|\nabla f_v(\theta)\| \|e_v(\theta)\|] \\ &\leq \frac{1}{\eta}\E_{v \sim \mathcal{D}}[\|\nabla f_v(\theta)\|^2] + \eta \E_{v \sim \mathcal{D}}[\|e_v(\theta)\|^2].
 \end{align*}
 Plugging it back in \eqref{expanded_i} yields
  \eqref{i_e_expectation} as desired.
 \end{proof}


\begin{proposition}\label{ES_IBES}
    Suppose Assumption \ref{assumption1} \rv{hold and $\mathbb{E}[v_i^2]<\infty$ for $1\leq i\leq m$}. Let $\eta >0$, $\zeta > 0$, $\epsilon^2(\theta) \coloneqq \E_{v\sim \mathcal{D}} [\|e_v(\theta)\|^2]$, and  $b(\zeta) = 1 - \frac{1}{2\zeta}$, $c(\zeta, \epsilon^2(\theta)) = \frac{\zeta}{2}\epsilon^2(\theta)$, $\tilde{A}(\eta) = \frac{1+\eta}{\eta} A$,
    $\tilde{B}(\eta) = \frac{1+\eta}{\eta} B$,
    $\tilde{C}(\eta, \epsilon^2(\theta)) = \frac{1+\eta}{\eta} C + (1+\eta)\epsilon^2(\theta)$.
    Then, for all $\theta \in \mathbb{R}^d$, the inexact stochastic gradient estimator satisfies 
     \begin{align}
        \langle \nabla f(\theta), \E_{v\sim \mathcal{D}}[z_v(\theta)]\rangle &\geq b(\zeta)\|\nabla f(\theta)\|^2 -c(\zeta, \epsilon^2(\theta)),\label{biasineq} \\
         \mathbb{E}_{v \sim \mathcal{D}}[ \|z_{v}(\theta)\|^2] &\leq 2\tilde{A}(\eta) ( f(\theta) - f^{\text{inf}} )+ \tilde{B}(\eta) \|\nabla f(\theta)\|^2 + \tilde{C}(\eta, \epsilon^2(\theta))\label{tildeABC}.
     \end{align}
\end{proposition}

\begin{proof}
For the inexact stochastic gradient $z_v(\theta)$, we have 
    \begin{equation*}
        \langle \nabla f(\theta), \mathbb{E}_{v \sim \mathcal{D}}[z_v(\theta)]\rangle = \langle \nabla f(\theta), \mathbb{E}_{v \sim \mathcal{D}}[\nabla f_v(\theta)] + \mathbb{E}_{v \sim \mathcal{D}} [e_v(\theta)]\rangle.
    \end{equation*}
Since the estimator of the exact gradient is unbiased i.e. $\E_{v\sim \mathcal{D}}[\nabla f_v(\theta)] = \nabla f(\theta)$, we have
\begin{equation}\label{expand_inner_i_e}
    \langle \nabla f(\theta), \mathbb{E}_{v \sim \mathcal{D}}[z_v(\theta)]\rangle = \|\nabla f(\theta)\|^2 + \langle \nabla f(\theta), \mathbb{E}_{v \sim \mathcal{D}}[e_v(\theta)]\rangle.
\end{equation}
From \cref{Young'sLemma} we know $\langle \nabla f(\theta), \mathbb{E}_{v \sim \mathcal{D}}[e_v(\theta)]\rangle \geq - \frac{\|\nabla f(\theta)\|^2}{2\zeta} - \frac{\zeta}{2}(\E_{v\sim \mathcal{D}} [\|e_v(\theta)\|])^2$. Combining it with \eqref{expand_inner_i_e} yields
$$ \langle \nabla f(\theta), \mathbb{E}_{v \sim \mathcal{D}}[z_v(\theta)]\rangle \geq (1 - \frac{1}{2\zeta}) \|\nabla f(\theta) \|^2 -  \frac{\zeta}{2}(\E_{v\sim \mathcal{D}} [\|e_v(\theta)\|])^2.$$
Now utilizing Jensen's inequality $\E_{v \sim \mathcal{D}}[\|e_v(\theta)\|^2] \geq (\E_{v \sim \mathcal{D}}[\|e_v(\theta)\|])^2$ we find
$$
\langle \nabla f(\theta), \mathbb{E}_{v \sim \mathcal{D}}[z_v(\theta)]\rangle \geq (1 - \frac{1}{2\zeta}) \|\nabla f(\theta) \|^2 -  \frac{\zeta}{2}\epsilon^2(\theta),
$$
which gives us $b(\zeta)$ and $c(\zeta, \epsilon^2(\theta))$ as required. 
From \cref{lemma_ABC}, there exist $A,B,C \geq 0$ such that 
    \begin{equation}\label{ABC}
        \mathbb{E}_{v \sim \mathcal{D}}[ \|\nabla f_{v}(\theta)\|^2] \leq 2 {A} ( f(\theta) - f^{\text{inf}} )+ {B}  \|\nabla f(\theta)\|^2 + {C}.
    \end{equation}
    Utilizing \cref{second_moment_connect}, for all $\eta>0$ we have    
    \begin{equation*}
    \E_{v \sim \mathcal{D}}[\|z_v(\theta)\|^2] \leq \frac{1+\eta}{\eta} \E_{v \sim \mathcal{D}} [\|\nabla f_v(\theta)\|^2] + (1+\eta) \E_{v \sim \mathcal{D}}[\|e_v(\theta)\|^2].
\end{equation*}
Using the upper bound of $\mathbb{E}_{v \sim \mathcal{D}}[ \|\nabla f_{v}(\theta)\|^2]$ from \eqref{ABC} in the inequality above yields
\begin{equation}\label{tildeABC_precise}
    \E_{v \sim \mathcal{D}}[\|z_v(\theta)\|^2] \leq 2\frac{1+\eta}{\eta} A ( f(\theta) - f^{\text{inf}} ) + \frac{1+\eta}{\eta} B  \|\nabla f(\theta)\|^2 + \frac{1+\eta}{\eta} C + (1+\eta)\epsilon^2(\theta).
\end{equation}
Comparing \eqref{ABC} and \eqref{tildeABC_precise}, we find the required $\tilde{A}(\eta)$, $\tilde{B}(\eta)$, and $\tilde{C}(\eta, \epsilon^2(\theta))$.
\end{proof}

\begin{remark}
 Since \cite[Proposition 2 and 3]{SGD_better} which states there exist certain $A$, $B$, $C\geq 0$ for each sampling strategy, holds under the same assumptions as \cref{ES_IBES}, using the connection between $\tilde{A}$, $\tilde{B}$, and $\tilde{C}$, and ${A}$, ${B}$, and ${C}$, \cite[Proposition 2 and 3]{SGD_better} holds for the inexact stochastic gradient.
\end{remark}

Now, utilizing the proposition above, we discuss the proofs of \cref{convergence} and \cref{cor:1}.
\begin{proof}[\textbf{Proof of \cref{convergence}}]
    Since \cref{ES_IBES} demonstrates, under \cref{assumption1}, that the inexact stochastic gradient estimator in our bilevel setting satisfies the biased ABC condition \cite[Assumption 9]{biasedSGD}, the convergence theorem for biased SGD \cite[Theorem 3]{biasedSGD} directly applies to ISGD. Hence, utilising \cref{ES_IBES} by setting $c_1 = \frac{b(\zeta)}{\tilde{B}(\eta)}$, $c_2 = \frac{2}{b(\zeta)} $, $c_3 = {\tilde{A}(\eta)} $, $c_4 = \frac{(1+\eta)C}{\eta b(\zeta)}+ \frac{1+\eta}{b(\zeta)}\rv{\tau}$, $c_5 = \frac{\zeta}{b(\zeta)}\rv{\tau}$, \rv{where $\tau\geq0$ such that $\sum_{k=0}^T \E[\epsilon_k^2]\leq \tau$}, $\zeta>\frac{1}{2}$ and $\eta>0$, \cref{convergence} holds.
\end{proof}
\begin{proof}[\textbf{Proof of \cref{cor:1}}]\label{proof:cor}
    By setting \begin{equation*}
T(\E[\epsilon_T^2]) = \frac{12\delta_0 L_{\nabla f}}{\zeta\E[\epsilon_T^2]}\max \left\{\frac{\tilde{B}(\eta)}{b(\zeta)}, \frac{12\delta_0 \tilde{A}(\eta)}{\zeta\E[\epsilon_T^2]}, \frac{2(1+\eta)(C +\eta \E[\epsilon_T^2])}{\eta \zeta\E[\epsilon_T^2]}\right\}, \ \text{and }
\end{equation*}
\begin{equation*}
    \rv{\alpha(\E[\epsilon_T^2])} = \min \left\{ \frac{1}{\sqrt{L_{\nabla f}\tilde{A}(\eta)T}}, \frac{b(\zeta)}{L_{\nabla f}\tilde{B}(\eta)}, \frac{\eta \zeta\E[\epsilon_T^2]}{2 L_{\nabla f}(1+\eta)(C +\eta \E[\epsilon_T^2])} \right\},
\end{equation*}
by using \cref{convergence} with constants $c_1 = \frac{b(\zeta)}{\tilde{B}(\eta)}$, $c_2 = \frac{2}{b(\zeta)} $, $c_3 = {\tilde{A}(\eta)} $, $c_4 = \frac{(1+\eta)C}{\eta b(\zeta)}+ \frac{1+\eta}{b(\zeta)}\rv{\tau}$, $c_5 = \frac{\zeta}{b(\zeta)}\rv{\tau}$, \rv{where $\tau\geq0$ such that $\sum_{k=0}^T \E[\epsilon_k^2]\leq \tau$}, $\zeta>\frac{1}{2}$ and $\eta>0$, for $T\geq T(\E[\epsilon_T^2])$ we have 
\begin{equation*}
   \min_{0 \leq t \leq T-1} \mathbb{E} \left [\|\nabla f(\theta^t)\|^2 \right ] \leq \frac{3\zeta \E[\epsilon_T^2]}{2b(\zeta)}. 
\end{equation*}
Now for a fixed accuracy $\delta>0$, substituting $\delta = \E[\epsilon_T^2]$, setting $T_\delta \coloneqq T(\delta)$, and \rv{$\alpha_\delta \coloneqq \alpha(\delta)$}, we find the desired result.
\end{proof}
\begin{remark}
    Note that if $\{\epsilon_t\}_{t=0}^\infty$ is chosen as a deterministic non-negative sequence such that $\sum_{t=0}^\infty \epsilon_t^2 < \infty$, and $\zeta\geq \frac{1}{2}$, then following similar steps as the the proof of \cref{cor:1}, we have 
    $\lim_{T\to\infty} \big( \min_{0 \leq t \leq T-1} \mathbb{E} \left [\|\nabla f(\theta^t)\|^2 \right ] \big) = 0$.
\end{remark}
\section{Numerical Experiments}
\rv{The implementation codes are available on the GitHub repository\footnote{\url{https://doi.org/10.5281/zenodo.14987053}}.}
\label{sec:numeric}
\subsection{Learning Field of Experts Regularizer for Denoising}
In this section, we consider Field of Experts \cite{FoE} (FoE) priors as a data-adaptive approach. \rv{In all experiments, accelerated gradient descent \cite[Algorithm 3]{chambolle_pock_2016} is used as the lower-level solver, while the conjugate gradient method is used to approximate the inverse of the Hessian of the lower-level objective involved in computing the hypergradient.} Given $m$ pairs $\{(y_i, x_i^*)\}_{i=1}^m$ of noisy images with Gaussian noise and clean images, the bilevel problem correspond to learning the FoE regularizer for image denoising takes the following form 
\begin{equation}\label{denoising_foe}
    \min_{\theta \in \mathbb{R}^d}  \frac{1}{m}\sum_{i=1}^{m}\|\hat{x}_i(\theta) - x_i^*\|^2 \quad s.t. \quad
         {\hat{x}_i(\theta)} = \arg\min_{x\in \mathbb{R}^n} \frac{1}{2}\|x - y_i\|^2 + R_\theta(x),
\end{equation}
where $R_\theta(x) = e^{\theta_0}\sum_{j = 1}^J e^{\theta_j} \|c_j \ast x\|_{\theta_{J+j}}$, each $c_j$ represents a $2D$ convolutional kernel with parameters dependent on $\theta$, and $\|x\|_\nu = \sum_{i=1}^N\sqrt{x_i^2+\nu^2} - \nu$ is the smoothed $1$-norm. We take $J=10$ kernels of size $7\times7$ with $3$ channels, which yields to $d = 1482$ parameters. Moreover we chose $m = 1024$ colored training images of size $96\times 96$ from STL10 dataset\footnote{\url{https://cs.stanford.edu/\textasciitilde acoates/stl10/}} \rv{in mini-batches of size $S=32$}, and to obtain each $y_i$, we add Gaussian noise drawn from $\mathcal{N}(0, \sigma)$ with noise level $25$ (i.e. $\sigma = \frac{25}{255}$).
\begin{figure}[htbp] 
    \centering
    \begin{subfigure}[b]{0.49\textwidth}
        \centering
        \includegraphics[width=\textwidth]{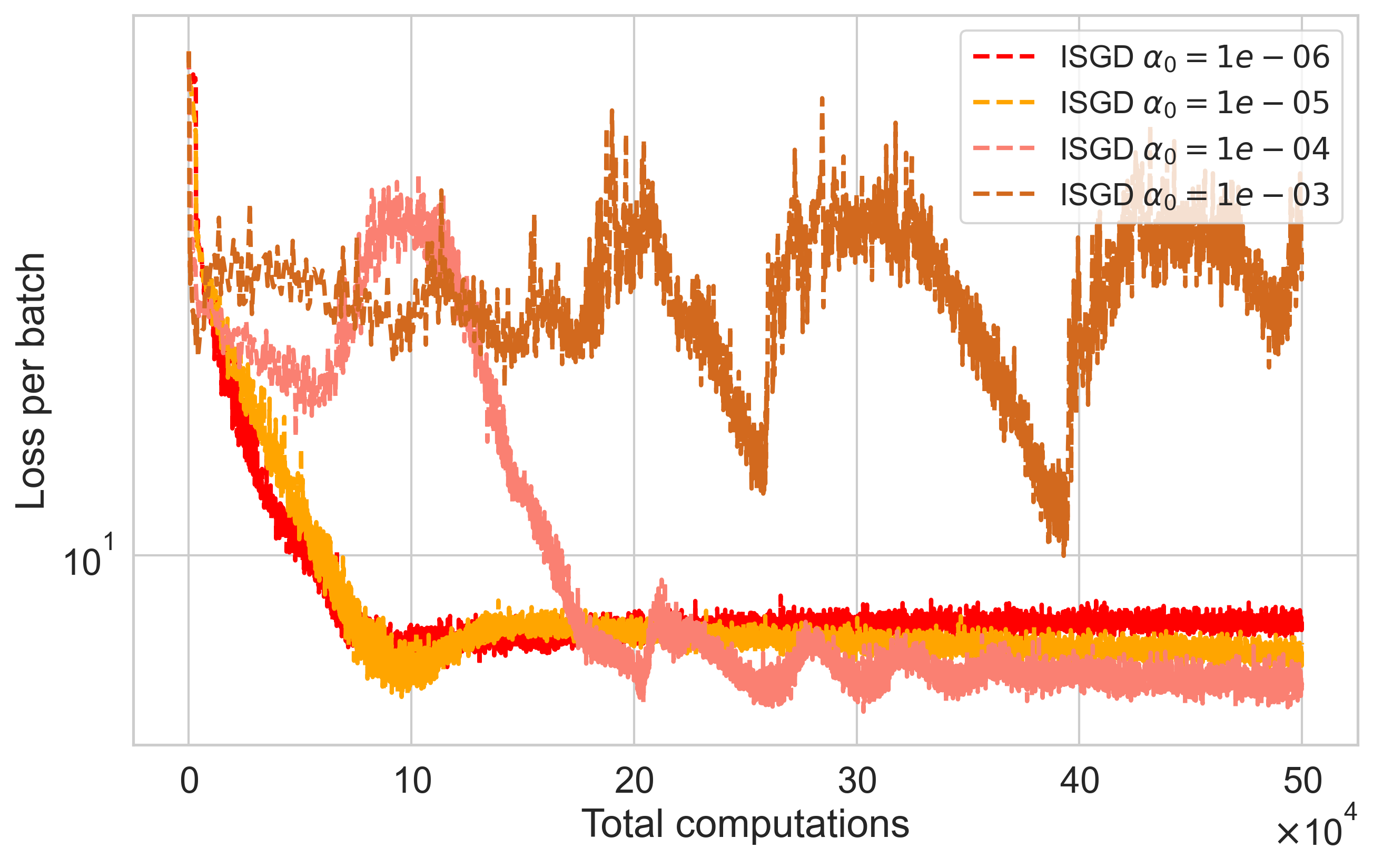}

        \label{fig:loss_total_iter_fixed}
    \end{subfigure}
    \hfill
    \begin{subfigure}[b]{0.49\textwidth}
        \centering
        \includegraphics[width=\textwidth]{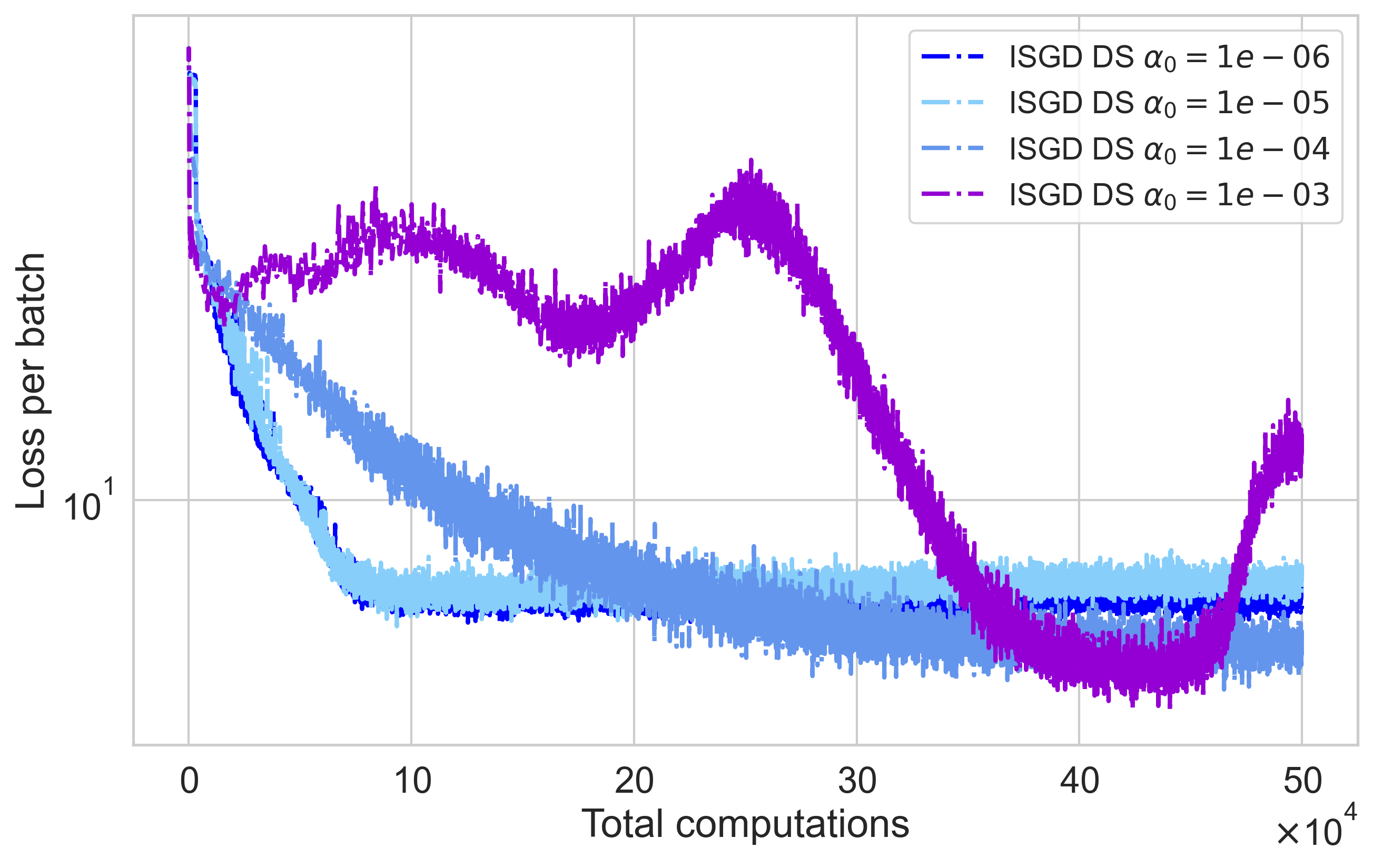}
     
        \label{fig:loss_total_iter_DS}
    \end{subfigure}
    \caption{\footnotesize{Loss per mini-batch versus total lower-level computational cost for ISGD with a constant step size, \rv{$\alpha_k = \alpha_0$}, \rv{(left)}, and a decreasing step size \rv{(DS)}, $\alpha_k = \frac{\alpha_0}{\sqrt{k}}$, \rv{(right)}. While the decreasing step size variant exhibits more stability, a fixed small step size achieves comparable performance.}}
    \label{fig:loss_total_iter_combined}
    \vspace{-15pt}
\end{figure}
In \cref{fig:loss_total_iter_combined}, the left subfigure shows the loss per iteration for ISGD with fixed step sizes plotted against the total lower-level computations for solving \eqref{denoising_foe}. The right subfigure presents the same results for ISGD with decreasing step sizes $\alpha_k = \frac{\alpha_0}{\sqrt{k}}$. For fixed $\alpha_0 = 10^{-3}$, oscillations were high without sustained loss reduction, while the decreasing step size version reduced oscillations, achieving the lowest loss temporarily before increasing at the end. Fixed $\alpha_0 = 10^{-4}$ initially fluctuated but eventually achieved the lowest loss among fixed settings. Its decreasing variant was more stable early on, eventually matching the fixed version’s success without diverging. Both $\alpha_0 = 10^{-5}$ and $10^{-6}$ showed strong initial loss reduction, plateauing after $10^5$ computations, with decreasing variants demonstrating similar trends but with less fluctuation. Overall, smaller step sizes provide stable initial decreases but limited long-term progress, whereas larger step sizes, especially with decreasing schedules, improve long-term performance, underscoring the importance of their theoretical analysis.
\begin{figure}[htbp]
    \centering
    \begin{subfigure}[b]{0.49\textwidth}
        \centering
        \includegraphics[width=\textwidth]{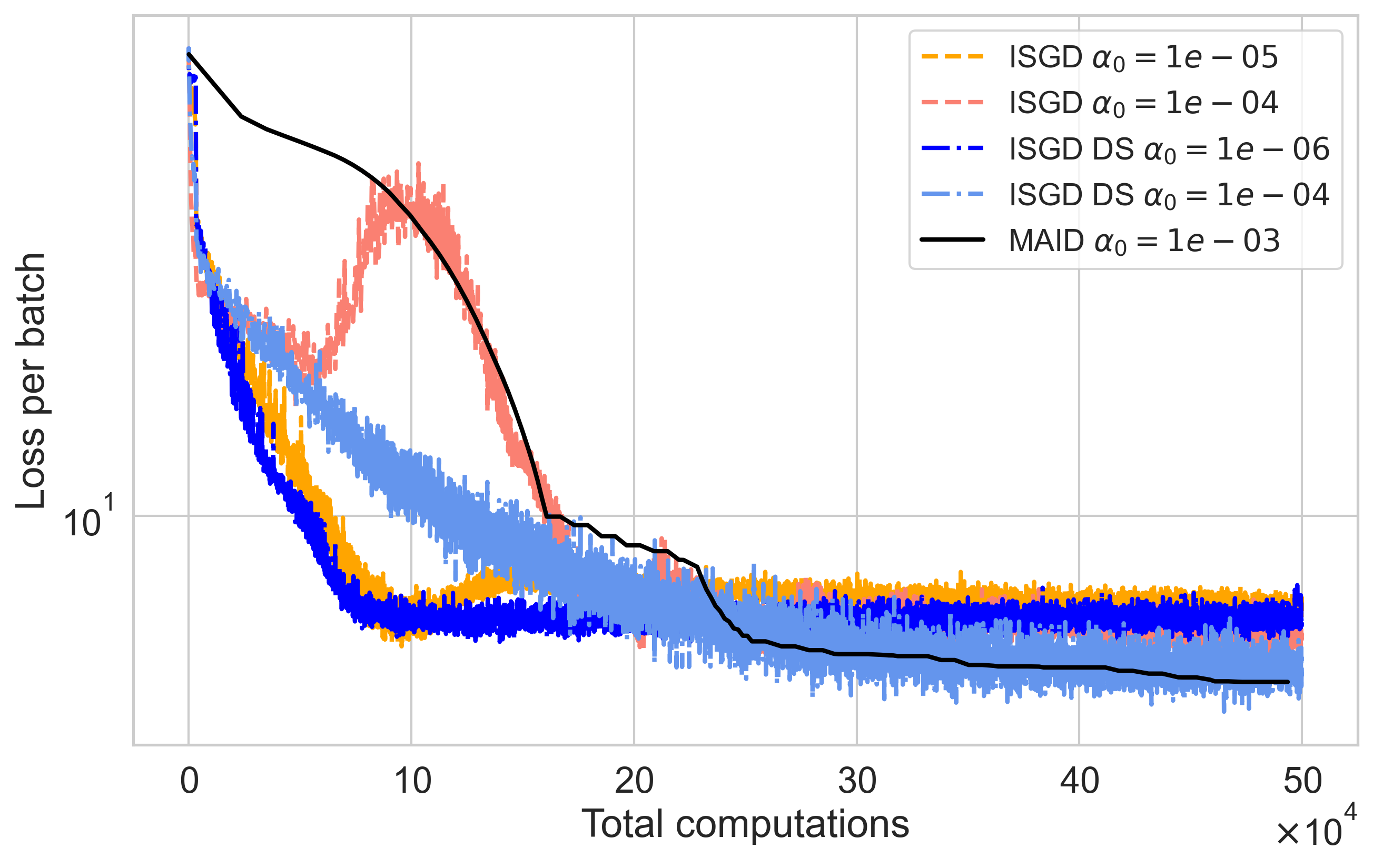}

        \label{fig:loss_total_iter_mixed}
    \end{subfigure}
    \hfill
    \begin{subfigure}[b]{0.49\textwidth}
        \centering
        \includegraphics[width=\textwidth]{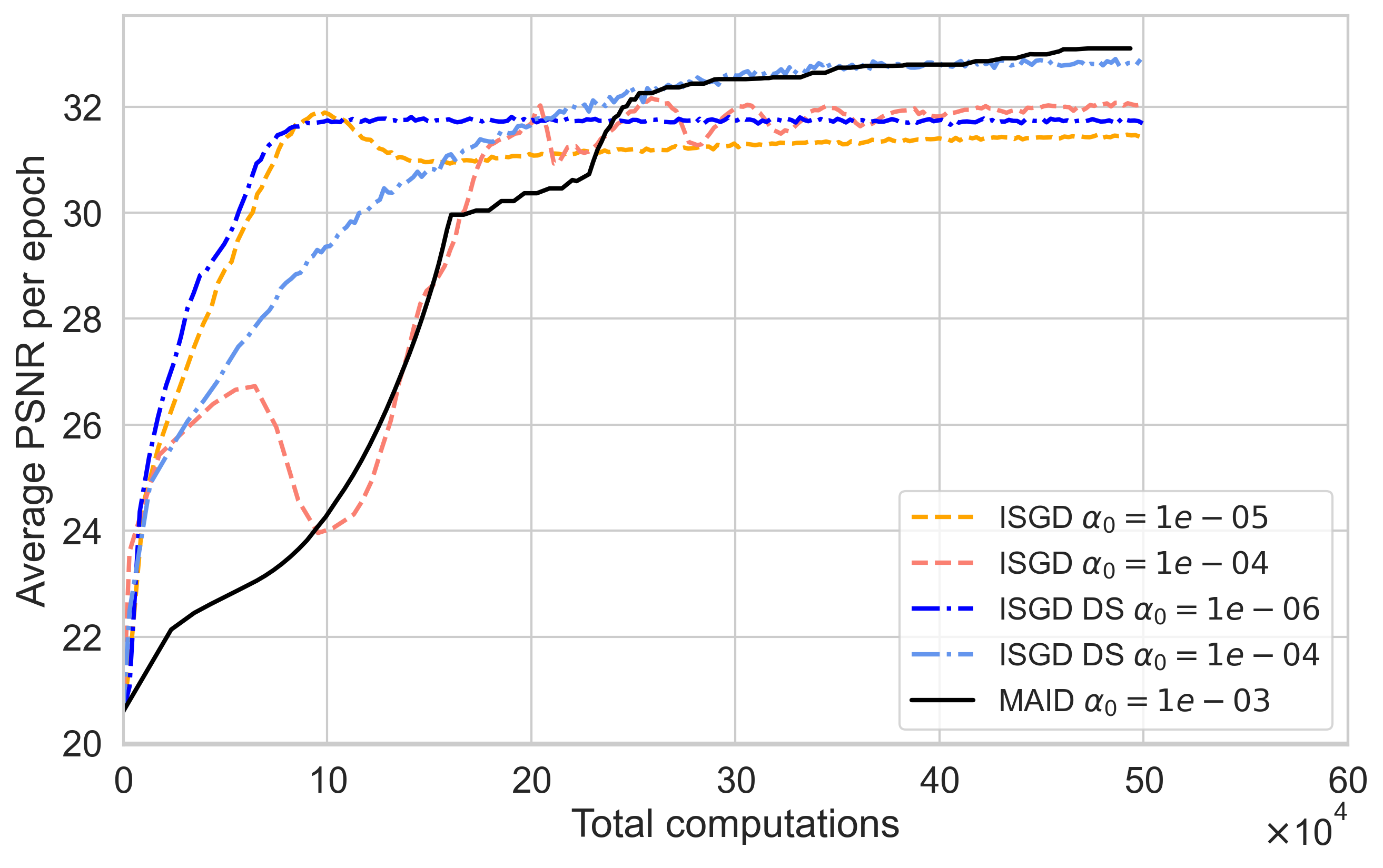}
     
        \label{fig:psnr_ISGD_MAID}
    \end{subfigure}
    \caption{\footnotesize{Comparison of ISGD (with and without decreasing step sizes) and MAID in terms of loss per mini-batch for ISGD variants and upper-level loss for MAID, plotted against computational cost, as well as average training PSNR per epoch vs total computations.}}
    \label{fig:loss_total_iter_ISGD_MAID}
    \vspace{-15pt}
\end{figure}
In \cref{fig:loss_total_iter_ISGD_MAID}, the left subfigure compares the upper-level loss captured per batch for ISGD variants and per epoch for Method of Adaptive Inexact Descent (MAID) \cite{salehi2024adaptively}, plotted against total lower-level computations. \rv{The hyperparameters of MAID are selected based on empirical tuning.} ISGD variants, except fixed step size $\alpha_0 = 10^{-4}$, reached a neighborhood of the local minima faster than MAID within $2\times 10^5$ computations. However, MAID’s adaptivity allowed it to perform comparably to the best ISGD variant ($\alpha_0 = 10^{-4}$ with decreasing step size) by the end. The right subfigure shows a similar trend for average PSNR per epoch, demonstrating ISGD’s faster performance on the training dataset. Up to $2\times 10^5$ computations, ISGD achieved significantly higher PSNR than MAID, highlighting its speed advantage during early training stages.

To qualitatively demonstrate this difference, \cref{fig:checkpoints} compares the performance of the best ISGD variants (fixed step size $\alpha_0 = 10^{-5}$, and $\alpha_0 = 10^{-4}$ with decreasing step size) to MAID on a training image. Before $2\times 10^5$ total computations, ISGD achieves a higher PSNR.
\begin{figure}[htbp]
    \centering
      \begin{minipage}[b]{0.155\linewidth}
        \centering
        \includegraphics[width=\linewidth]{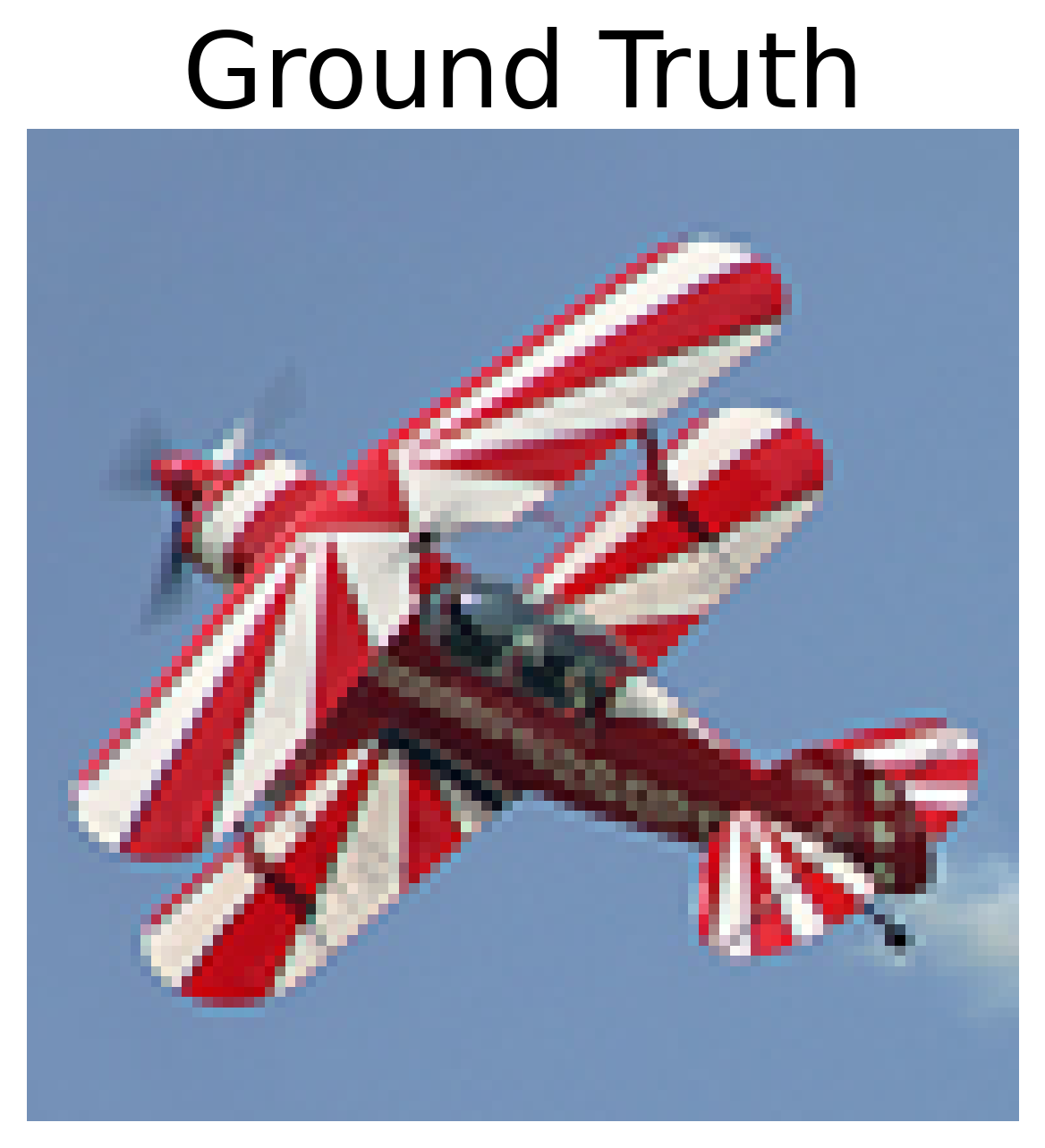}
    \end{minipage}
    \hspace{40pt}
    \begin{minipage}{0.05\linewidth}
    \centering
    \vspace{-45pt}
    \rotatebox{90}{\rv{\footnotesize ISGD}}
    \end{minipage}
\hfill
    \begin{minipage}[b]{0.155\linewidth}
    \centering
    \includegraphics[width=\linewidth]{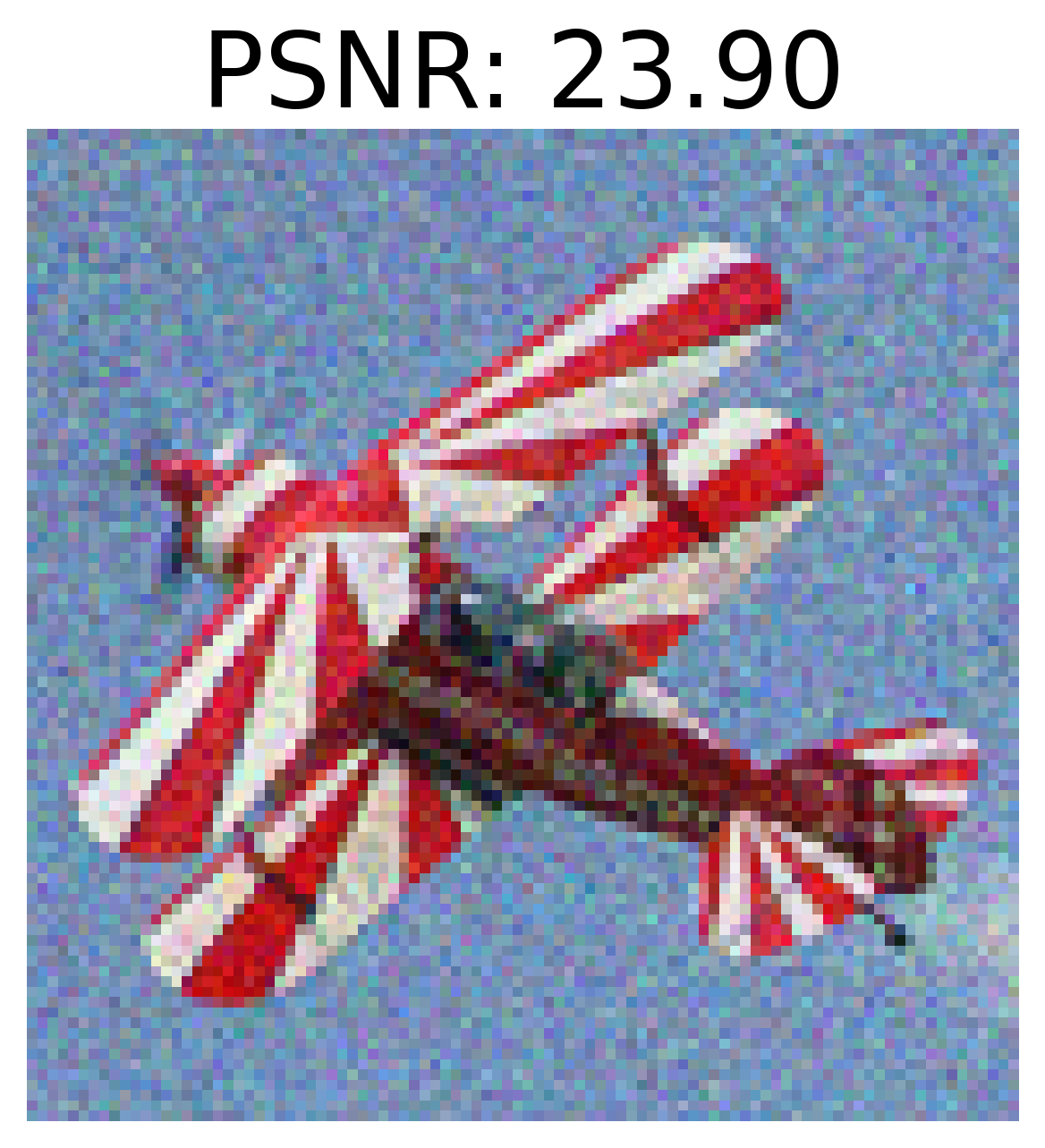}
\end{minipage}
\hfill
\begin{minipage}[b]{0.155\linewidth}
    \centering
    \includegraphics[width=\linewidth]{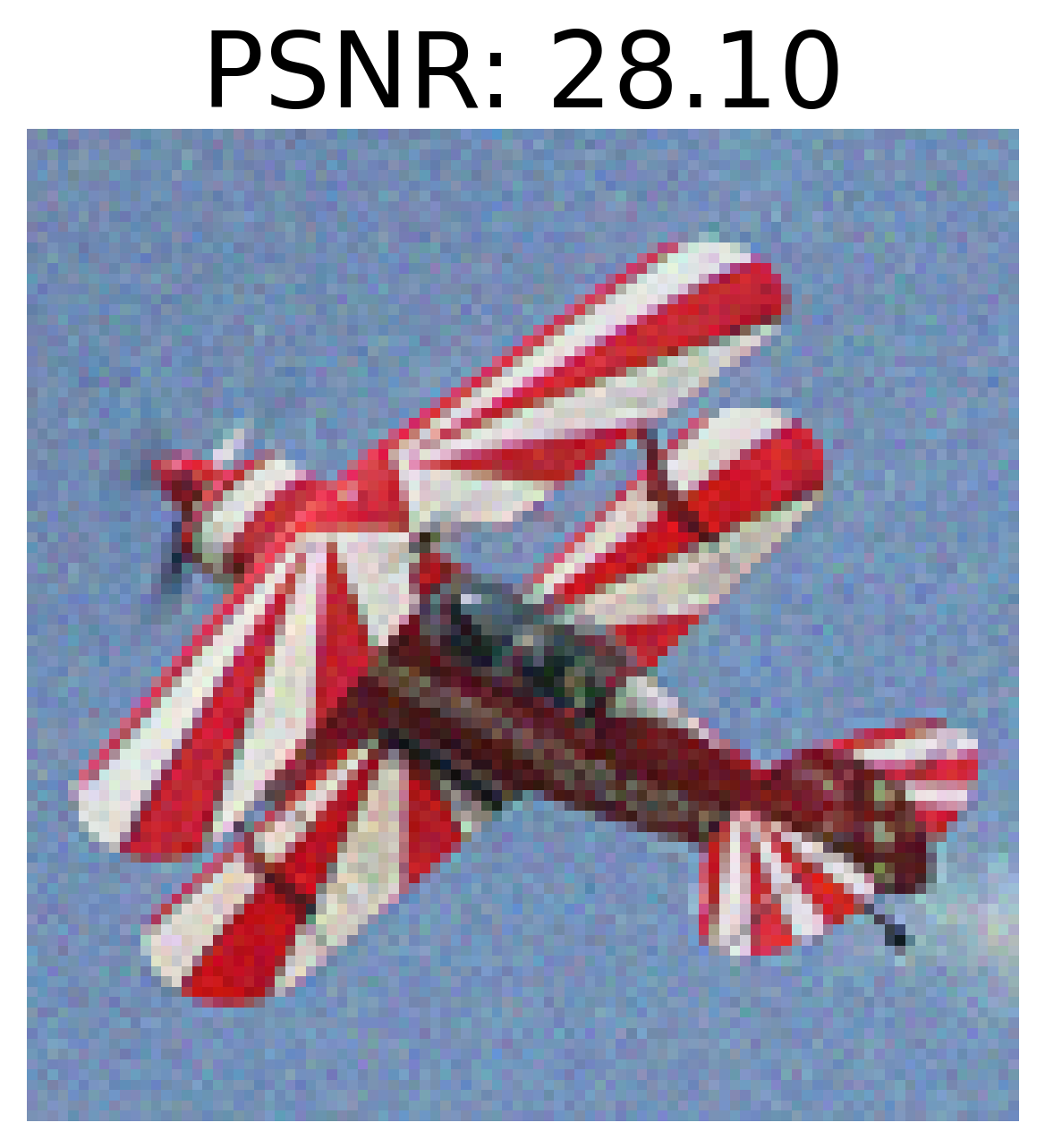}
\end{minipage}
\hfill
\begin{minipage}[b]{0.155\linewidth}
    \centering
    \includegraphics[width=\linewidth]{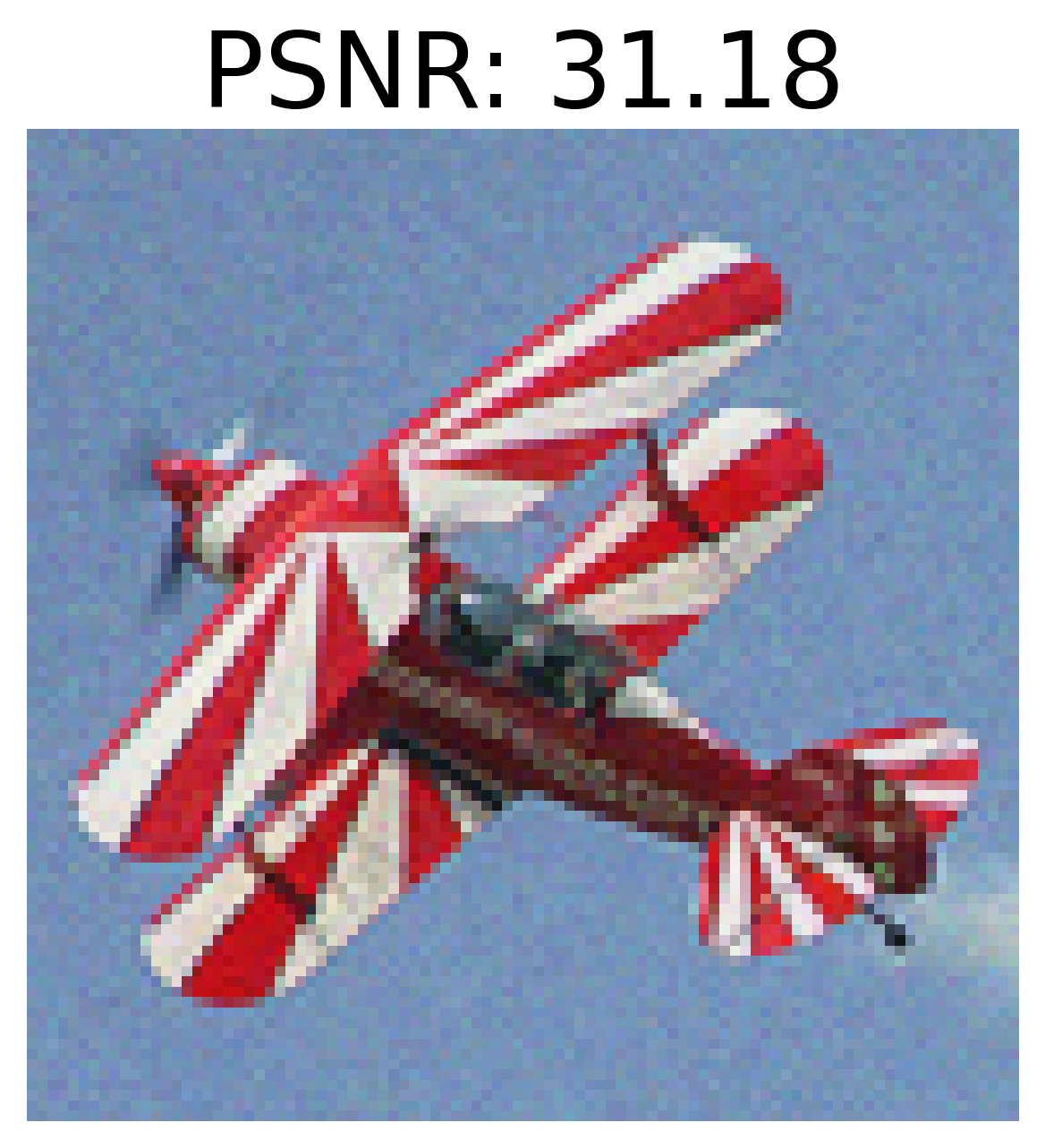}
\end{minipage}
\hfill
\begin{minipage}[b]{0.155\linewidth}
    \centering
    \includegraphics[width=\linewidth]{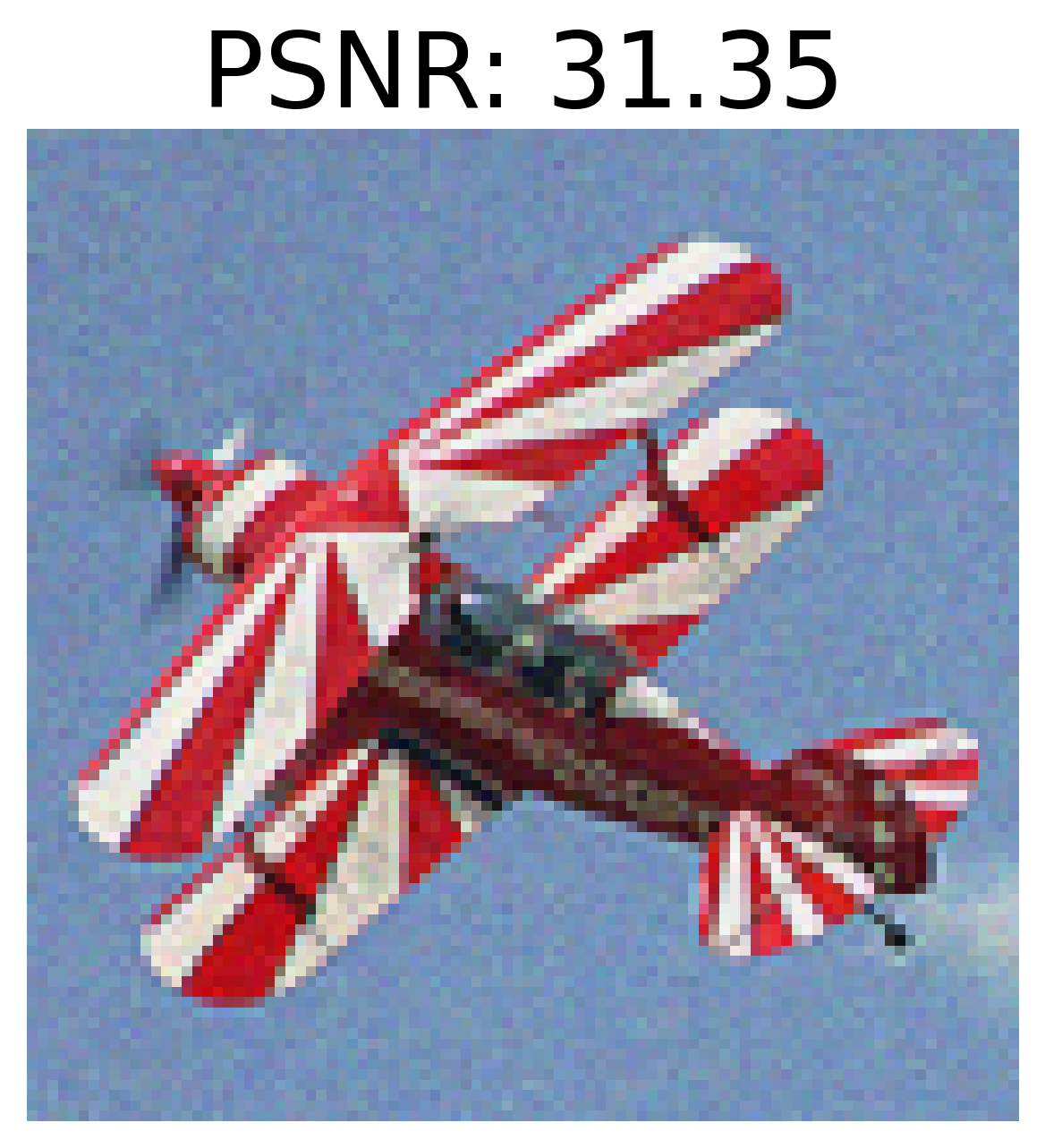}
\end{minipage}
     \vspace{2pt}
     %
\begin{minipage}[b]{0.155\linewidth}
        \centering
        \includegraphics[width=\linewidth]{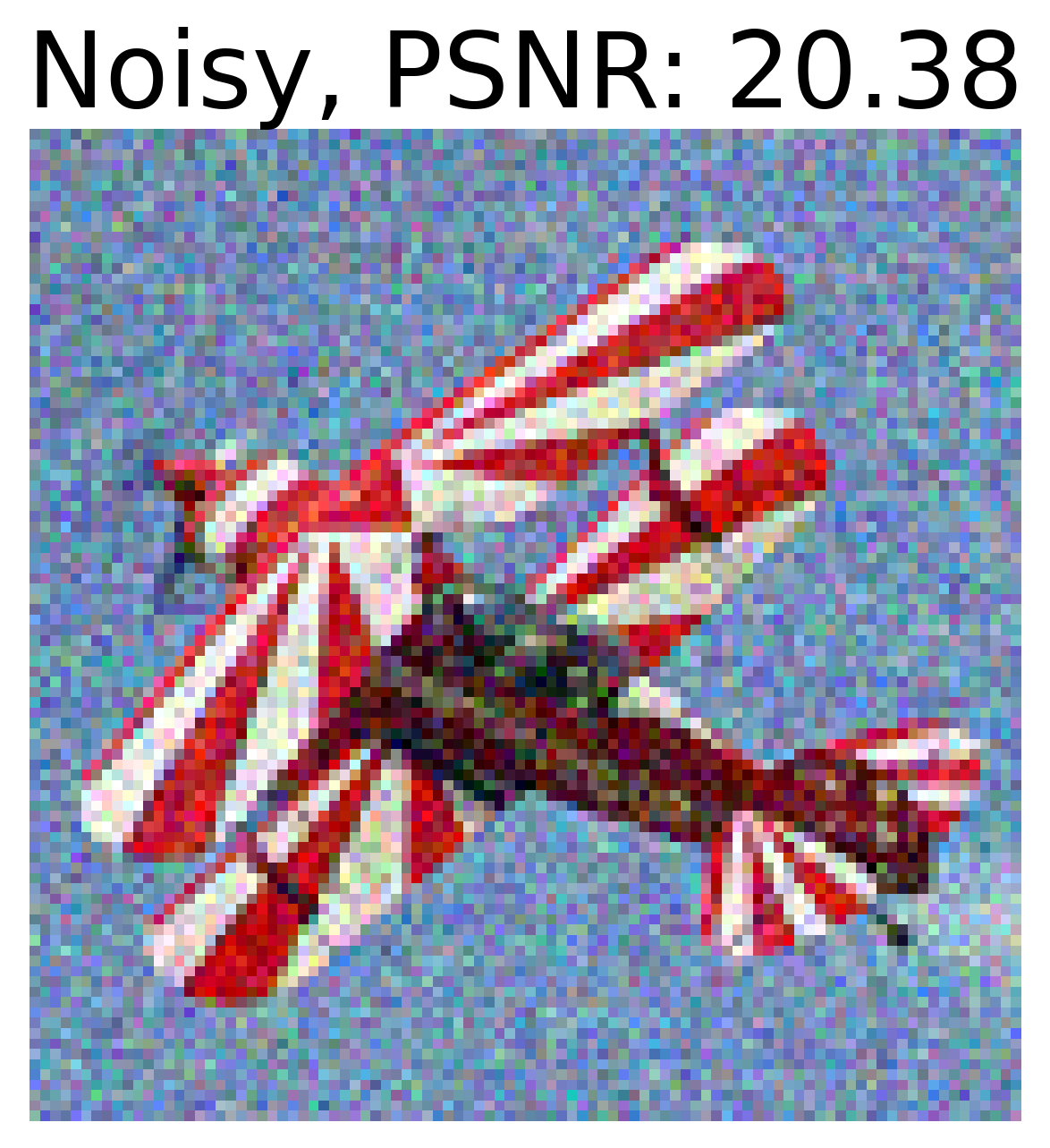}
    \end{minipage}
\hspace{40pt}
    \begin{minipage}{0.05\linewidth}
    \centering
    \vspace{-45pt}
    \rotatebox{90}{\rv{\footnotesize ISGD DS}}
    \end{minipage}
\hfill
\begin{minipage}[b]{0.155\linewidth}
        \centering
        \includegraphics[width=\linewidth]{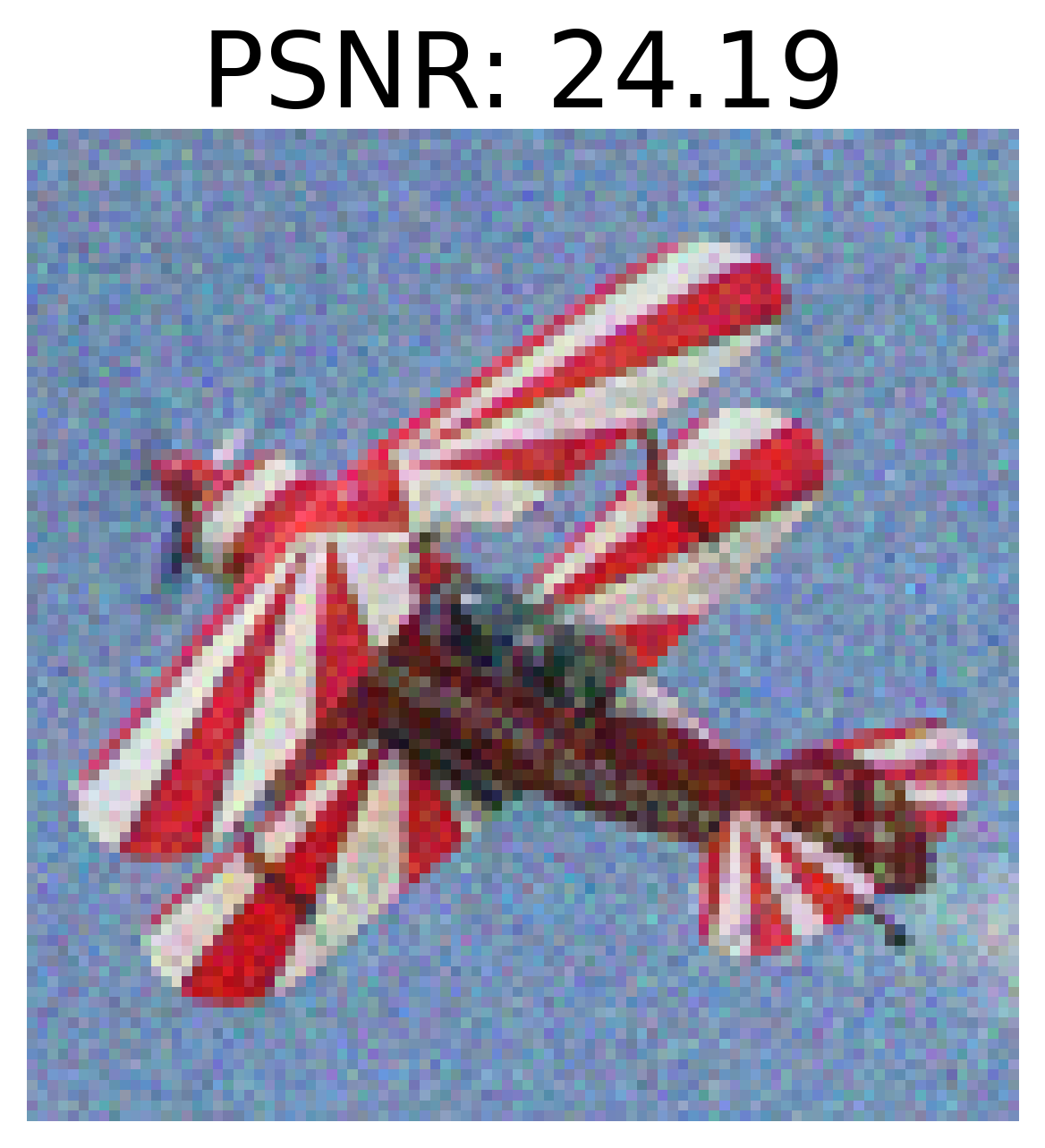}
    \end{minipage}
    \hfill
    \begin{minipage}[b]{0.155\linewidth}
        \centering
        \includegraphics[width=\linewidth]{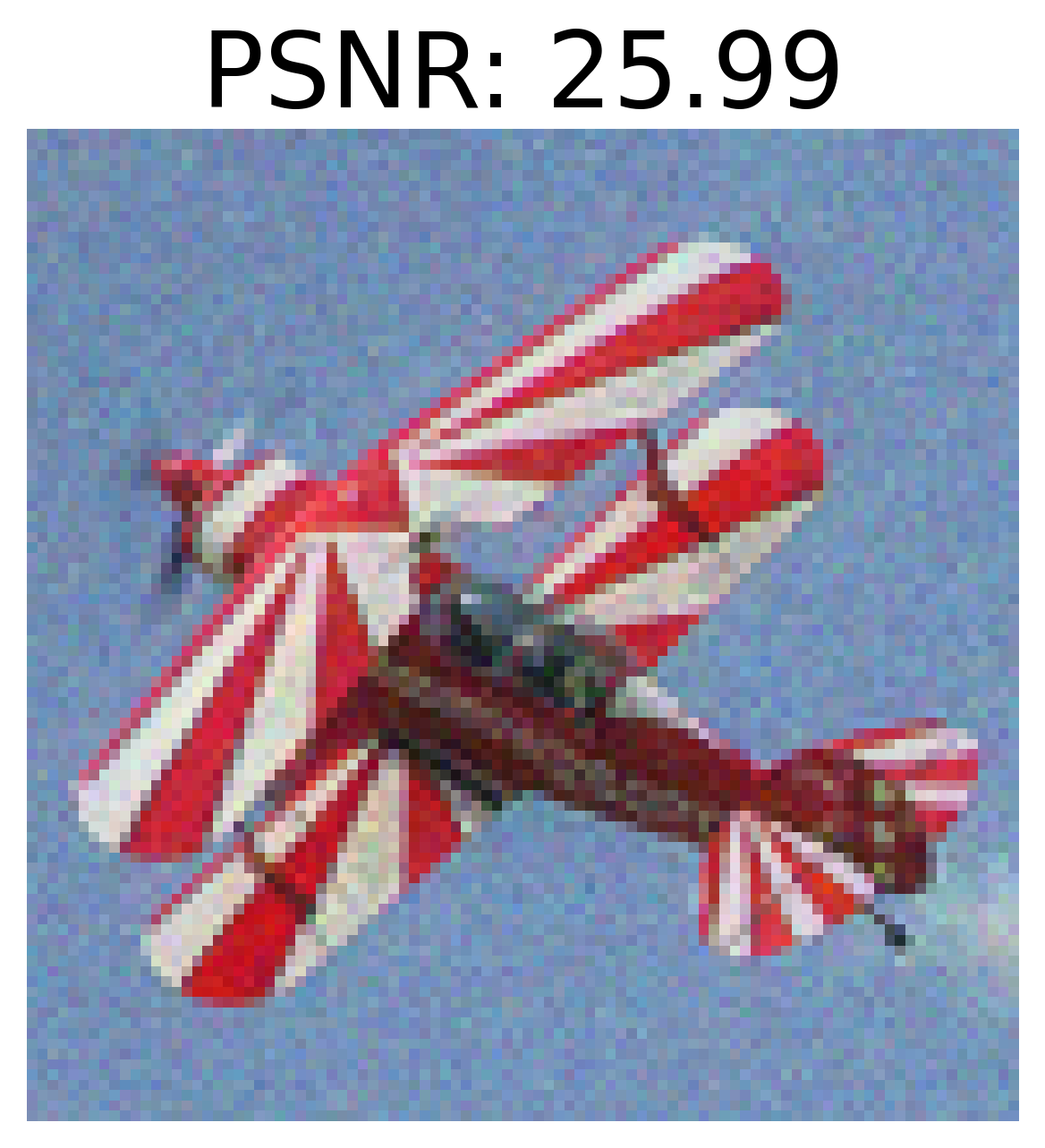}
    \end{minipage}
    \hfill
    \begin{minipage}[b]{0.155\linewidth}
        \centering
        \includegraphics[width=\linewidth]{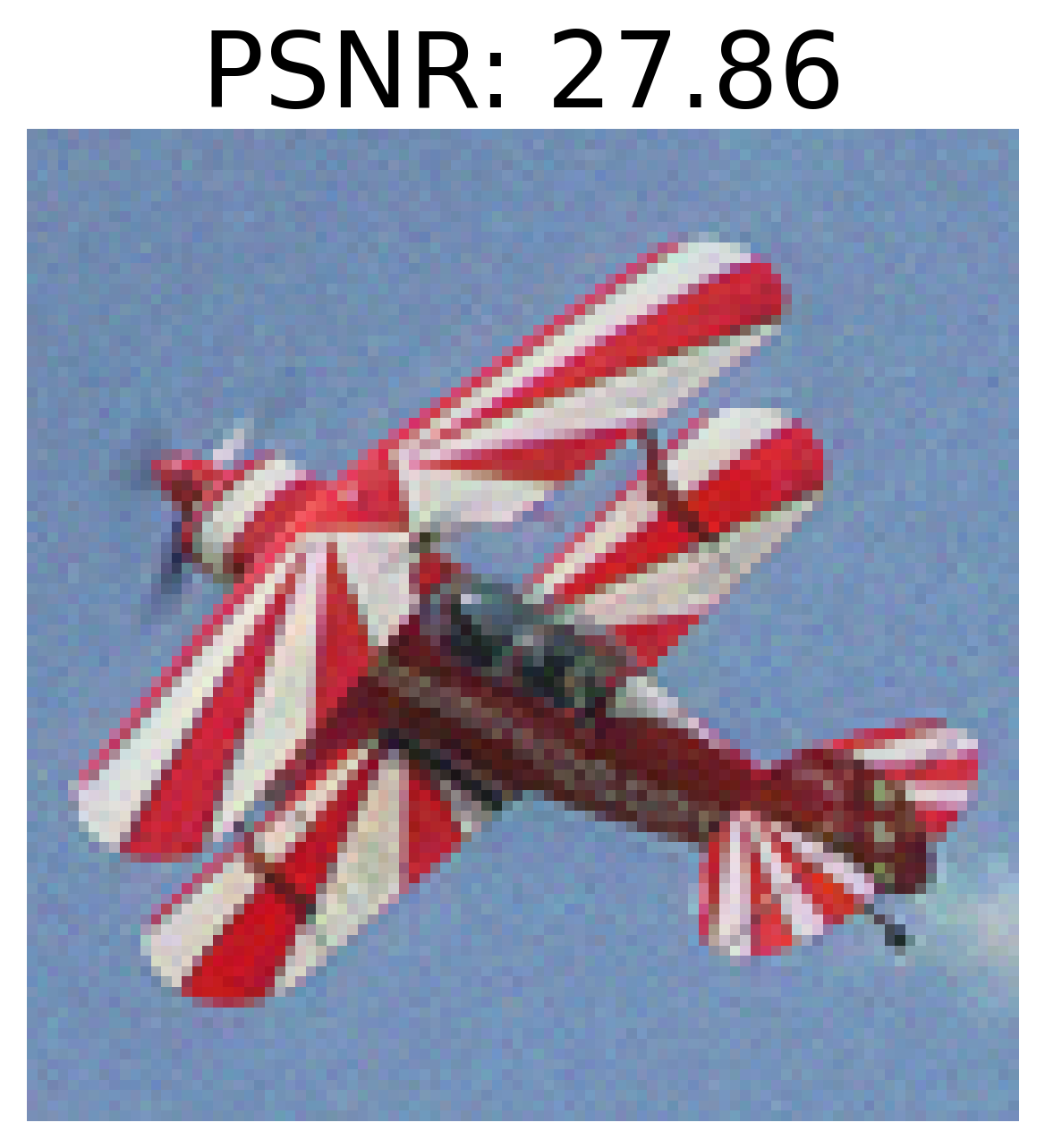}
    \end{minipage}
    \hfill
    \begin{minipage}[b]{0.155\linewidth}
        \centering
        \includegraphics[width=\linewidth]{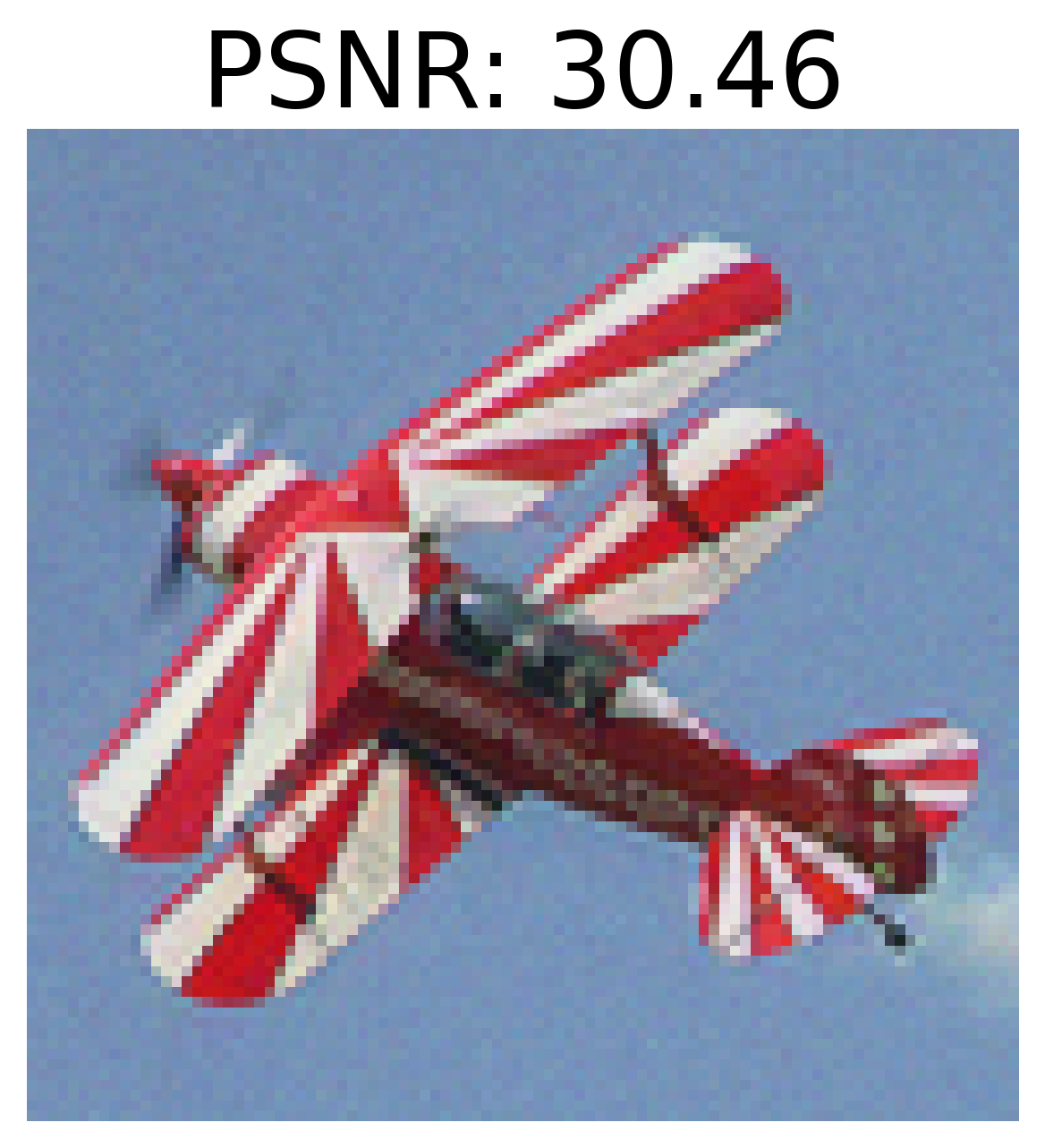}
    \end{minipage}
    \begin{minipage}[b]{0.155\linewidth}
    \centering
    \caption*{ } 
\end{minipage}
    \hspace{40pt}
     \begin{minipage}{0.05\linewidth}
    \centering
    \vspace{-90pt}
    \rotatebox{90}{\rv{\footnotesize MAID}}
    \end{minipage}
    \hfill
    \begin{minipage}[b]{0.155\linewidth}
        \centering
        \includegraphics[width=\linewidth]{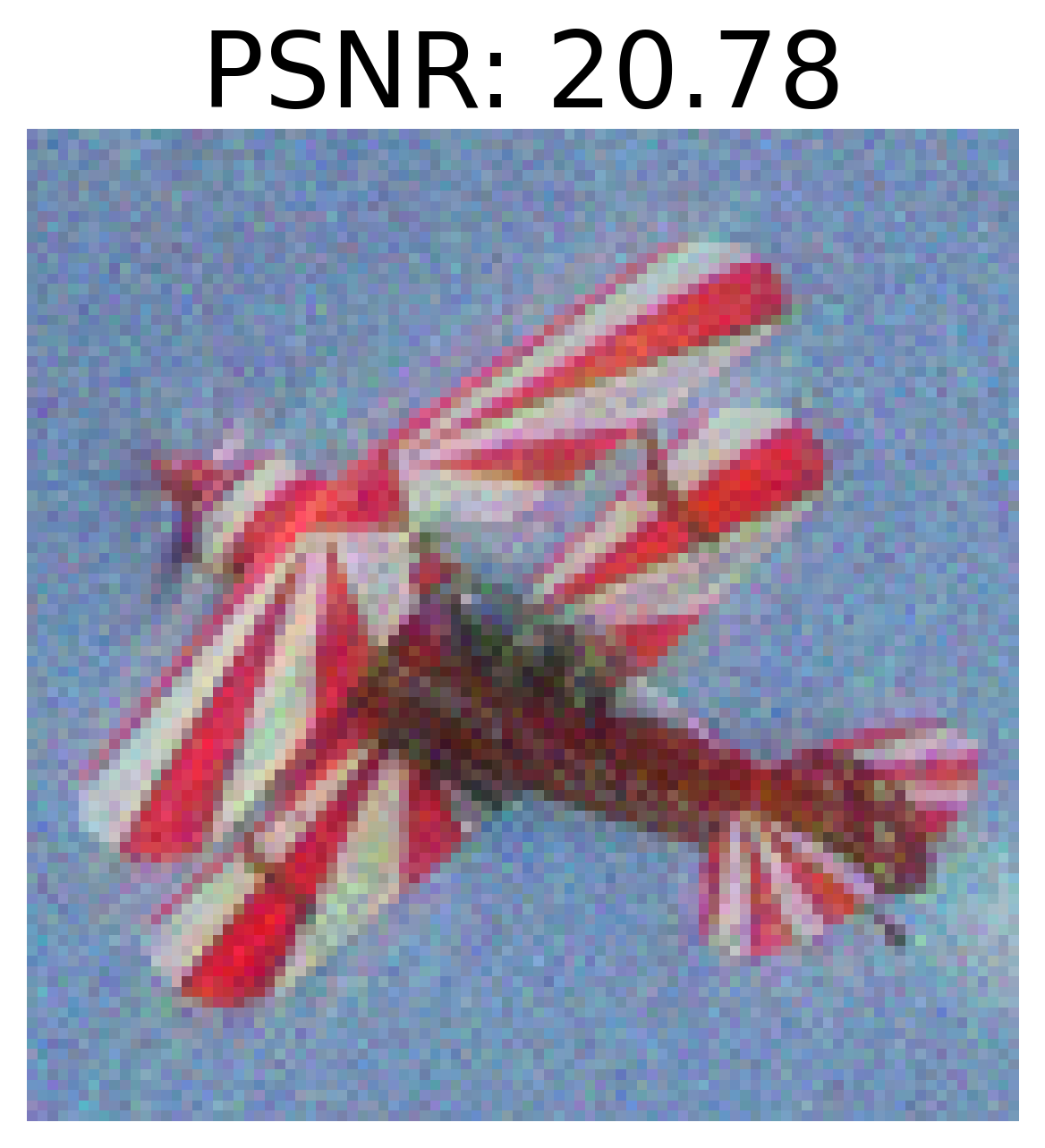}
        \caption*{\footnotesize $10^4$}
    \end{minipage}
    \hfill
    \begin{minipage}[b]{0.155\linewidth}
        \centering
        \includegraphics[width=\linewidth]{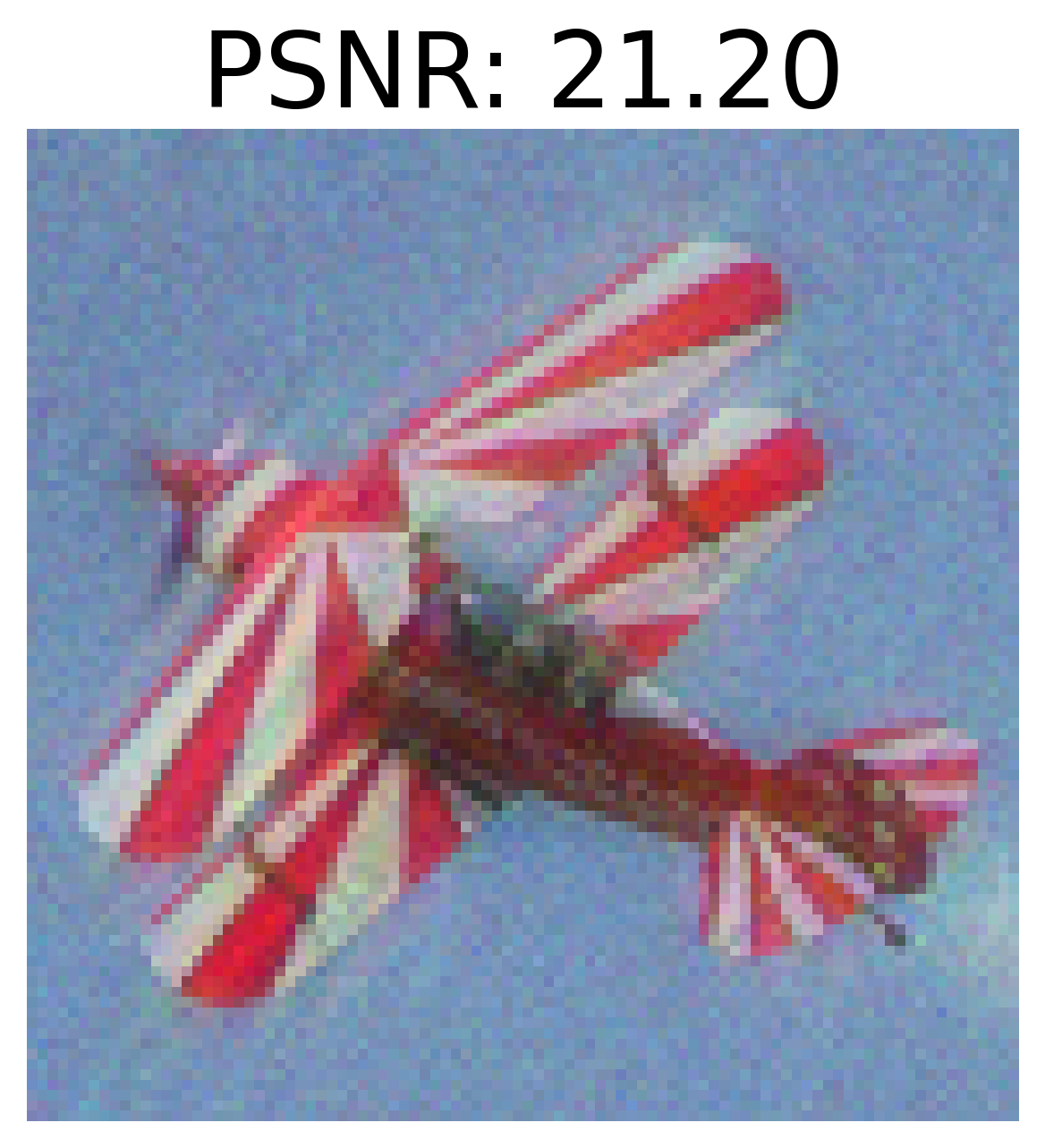}
        \caption*{\footnotesize $5 \times 10^4$}
    \end{minipage}
    \hfill
    \begin{minipage}[b]{0.155\linewidth}
        \centering
        \includegraphics[width=\linewidth]{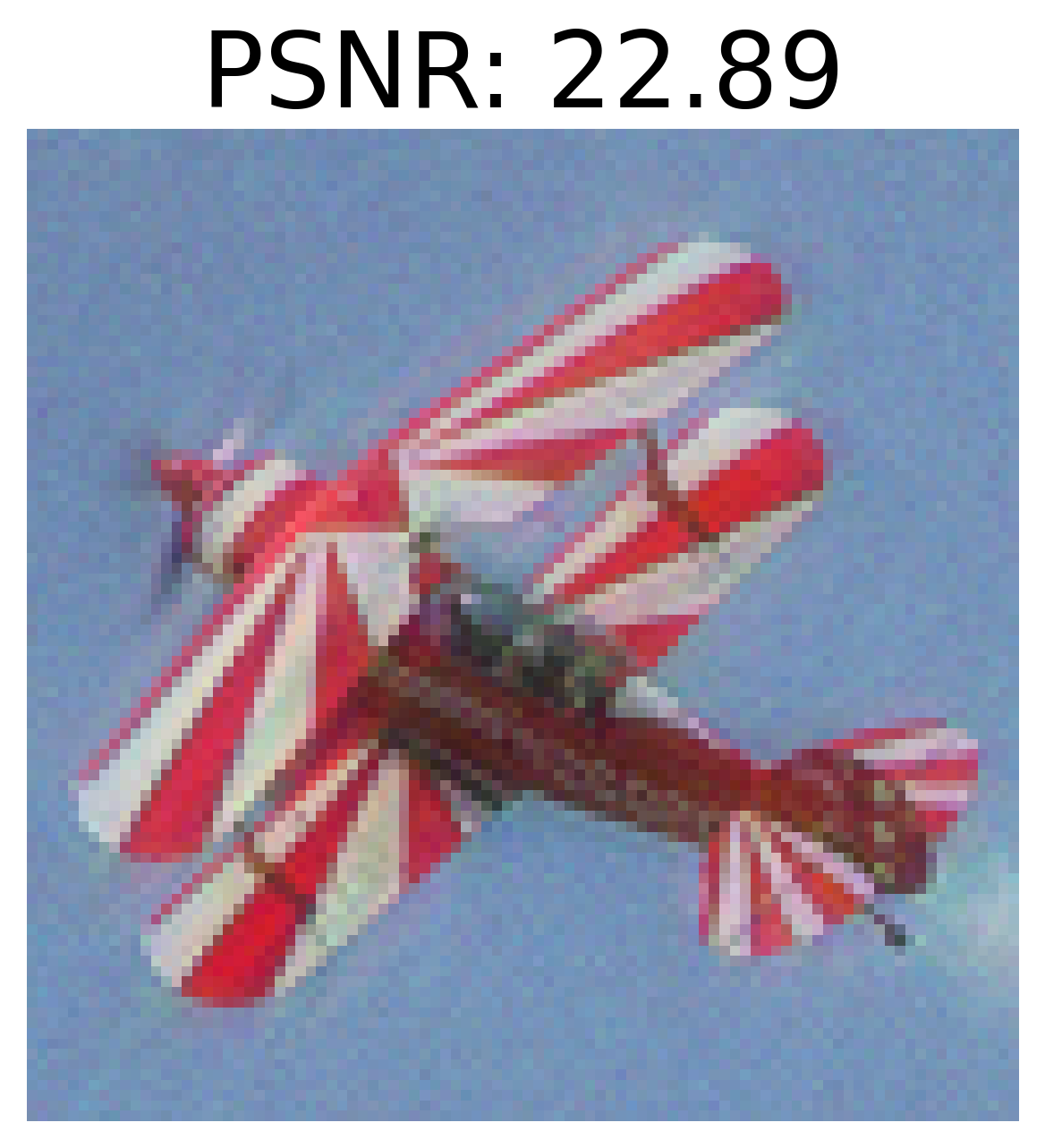}
        \caption*{\footnotesize $10^5$}
    \end{minipage}
    \hfill
    \begin{minipage}[b]{0.155\linewidth}
        \centering
        \includegraphics[width=\linewidth]{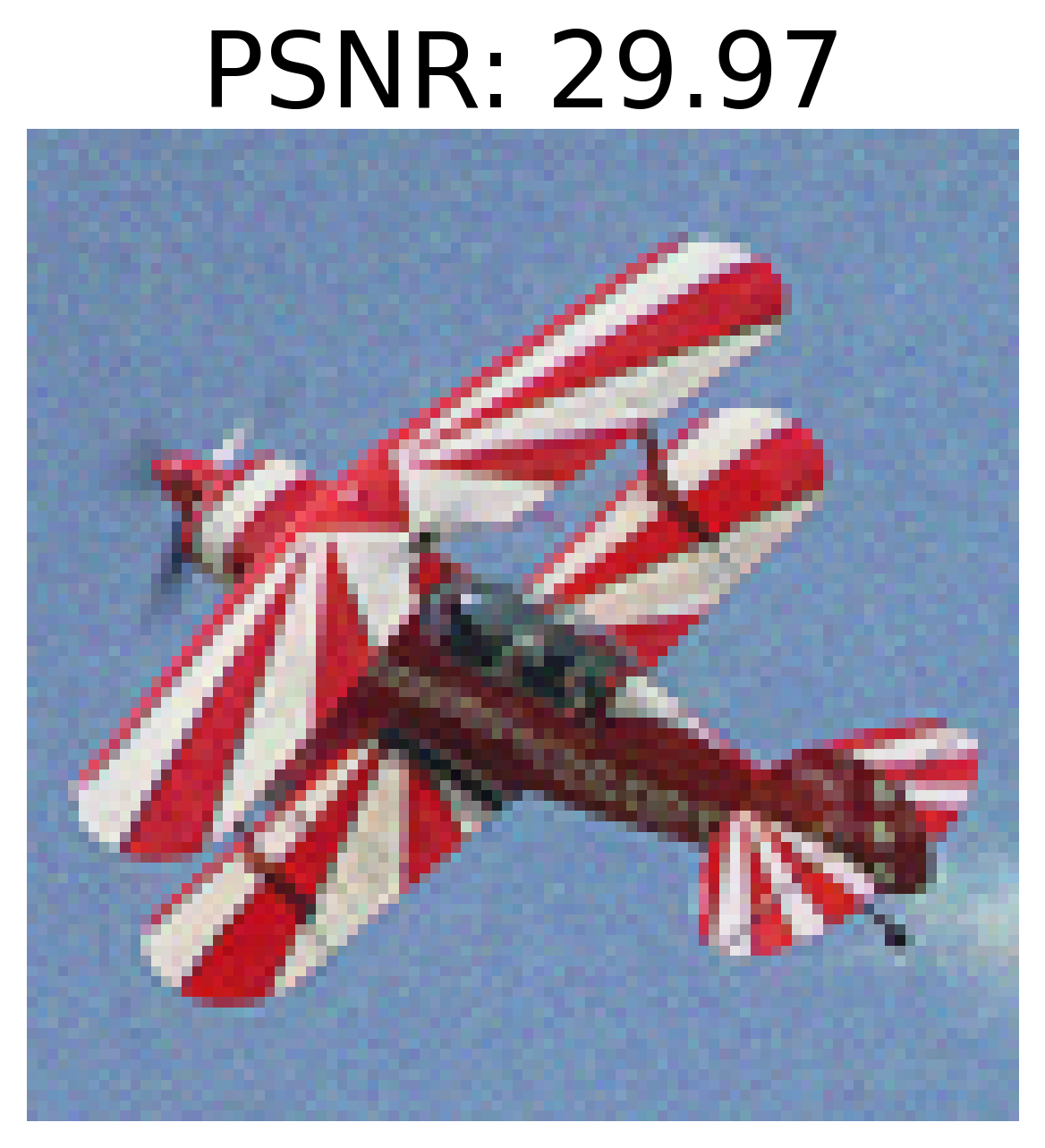}
        \caption*{\footnotesize $2 \times 10^5$}
    \end{minipage}
    \caption{\footnotesize{First row: A denoised training image after \(10^4\), \(5 \times 10^4\), \(10^5\), and \(2 \times 10^5\) computations (lower-level + linear solver iterations) using ISGD with fixed step size \(\alpha = 10^{-5}\). 
Second row: Same image with ISGD using decreasing step size \rv{(DS)}, \(\alpha_k = \frac{\alpha_0}{\sqrt{k}}\), \(\alpha_0 = 10^{-4}\).  
Third row: Same image with MAID, \(\alpha_0 = 10^{-3}\).  }}
    \label{fig:checkpoints}
    \vspace{-5pt}
\end{figure}
\subsection{Learning Field of Experts Regularizer for Deblurring}
In this section, using the FoE model as in the previous section, we consider an image deblurring problem as the lower-level problem. For training, we use $m = 1536$ color images with three channels, each of size $256 \times 256$, from the Oxford-IIIT Pet Dataset\footnote{\url{https://www.robots.ox.ac.uk/~vgg/data/pets/}}. In this case, while the upper-level problem remains the same as in \eqref{denoising_foe}, the lower-level problem is replaced with ${\hat{x}_i(\theta)} = \arg\min_{x\in \mathbb{R}^n} \frac{1}{2}\|Ax - y_i\|^2 + R_\theta(x),$ 
where $A$  is the convolution operator corresponding to a Gaussian blur kernel of size $7\times7$ with $\sigma = 2$. Each $y_i$ is obtained by applying $A$ to $x_i^*$ and adding Gaussian noise with a noise level of $5$. 
\begin{figure}[htbp]
    \centering
    \includegraphics[width=0.5\linewidth]{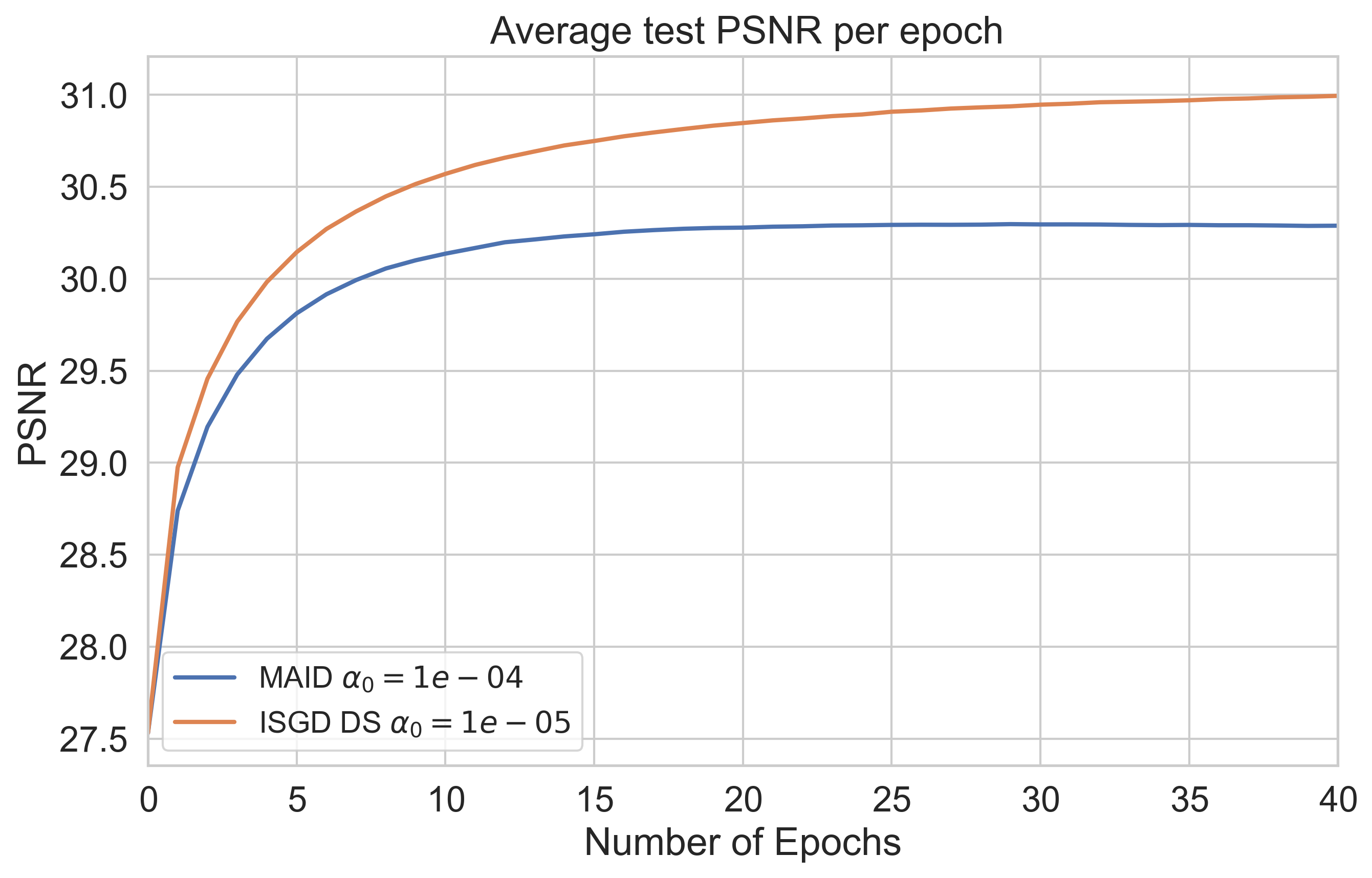}
    \caption{\footnotesize{Comparison on test dataset between the average PSNR per MAID iteration (full-batch) and average PSNR per epoch of ISGD with decreasing step size $\alpha_k = \frac{\alpha_0}{\sqrt{k}}$.}}
    \label{fig:test_psnr}
\end{figure}
\begin{figure}[htbp]\label{fig:comparison_deblur}
\centering
\begin{subfigure}{0.24\textwidth}
    \centering
    \includegraphics[width=\textwidth]{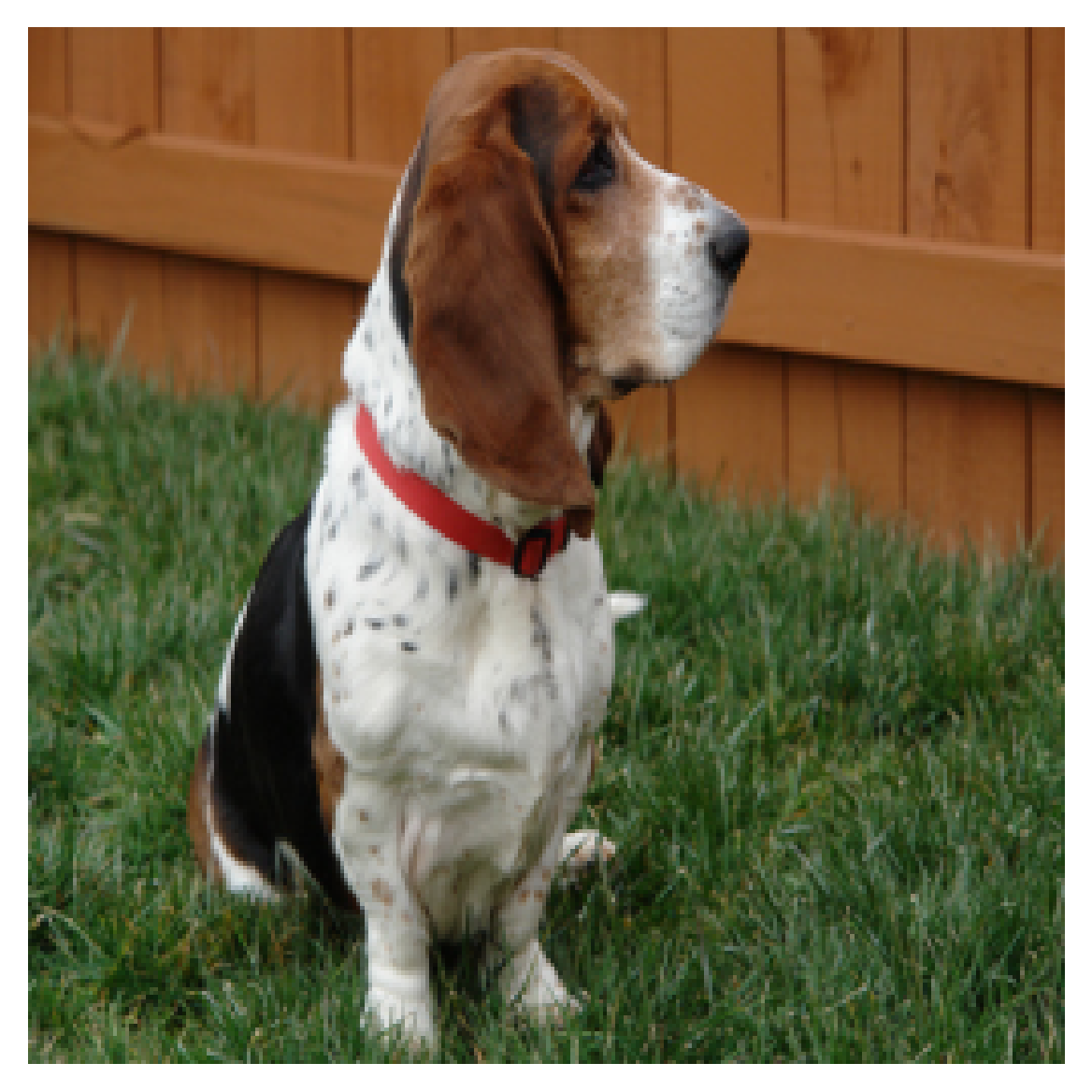}
    \caption{Ground truth}
    \label{fig:gt}
\end{subfigure}%
\hfill
\begin{subfigure}{0.24\textwidth}
    \centering
    \includegraphics[width=\textwidth]{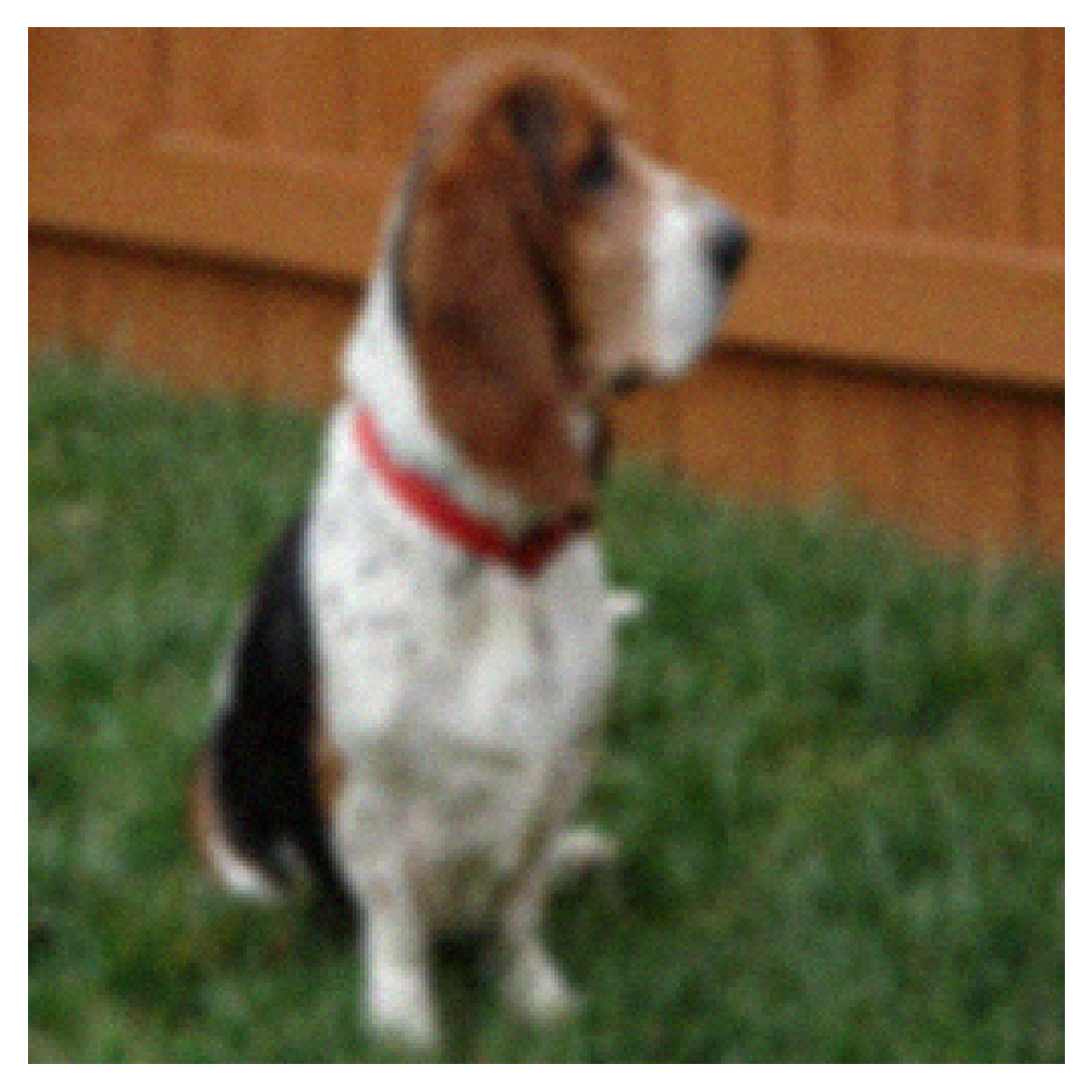}
    \caption{{Blurry: 24.78 dB}}
    \label{fig:blur}
\end{subfigure}%
\hfill
\begin{subfigure}{0.24\textwidth}
    \centering
    \includegraphics[width=\textwidth]{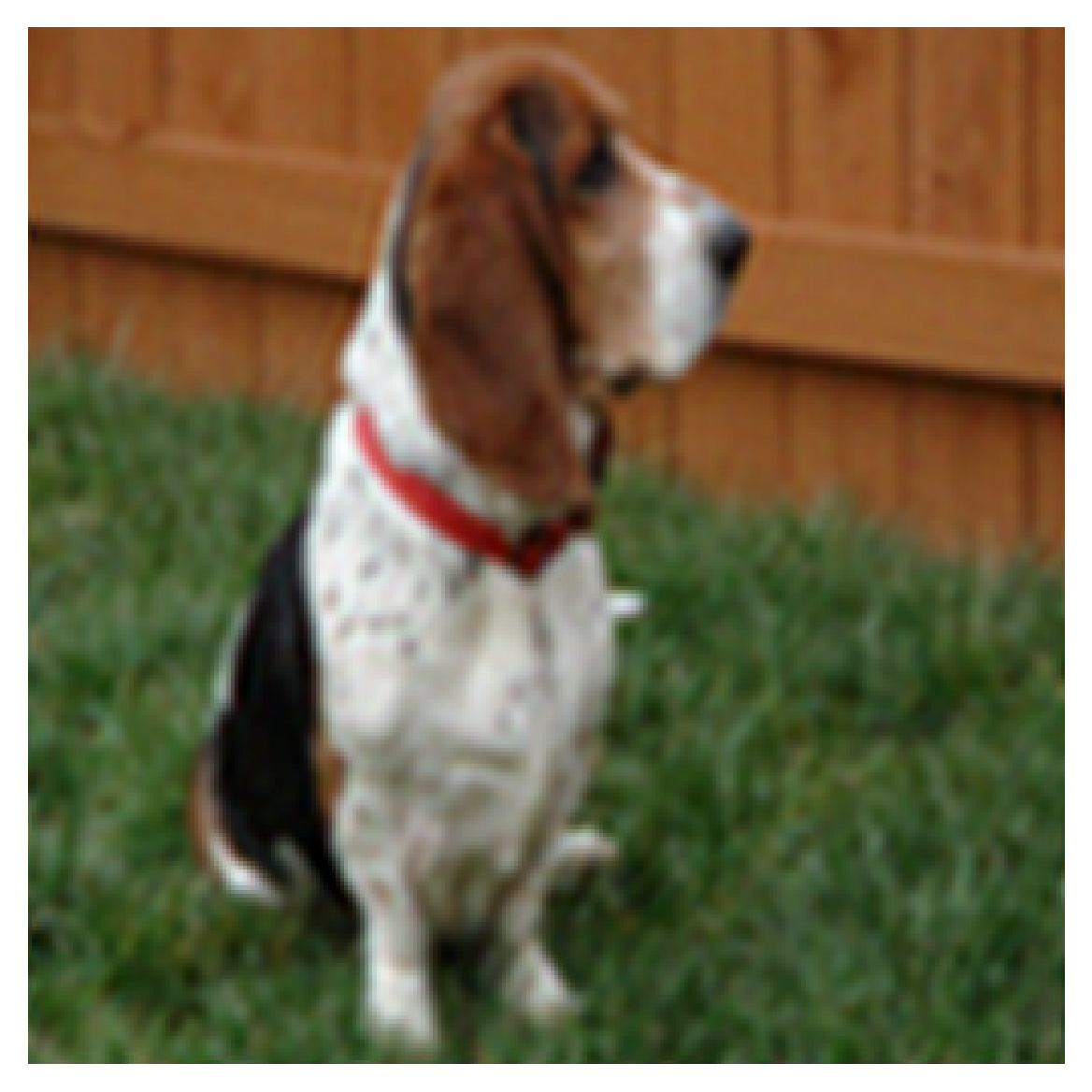}
    \caption{MAID: 27.06 dB}
    \label{fig:deblur_MAID}
\end{subfigure}%
\hfill
\begin{subfigure}{0.24\textwidth}
    \centering
    \includegraphics[width=\textwidth]{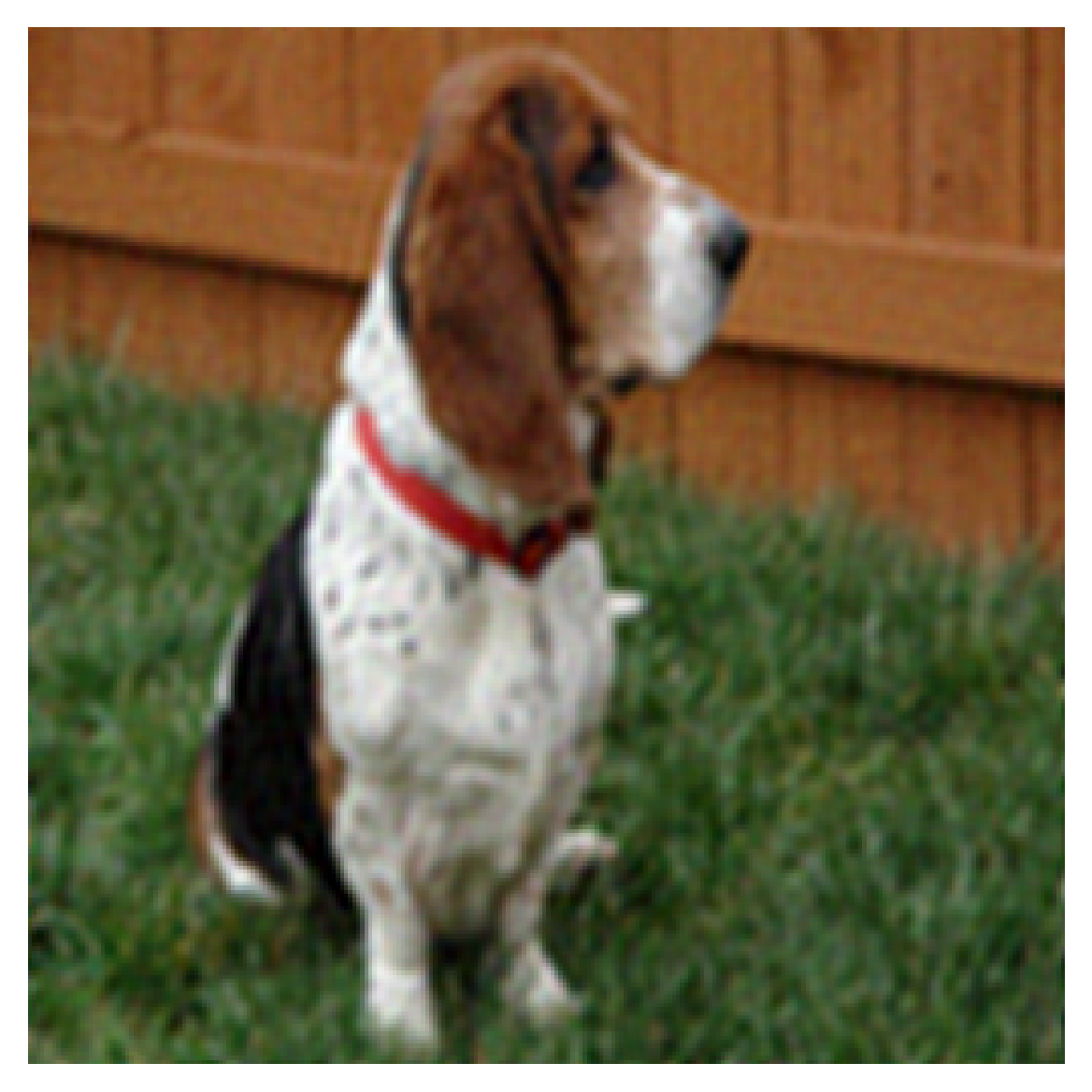}
    \caption{ISGD: 27.86 db}
    \label{fig:deblur_ISGD}
\end{subfigure}
\caption{\footnotesize{Comparison of deblurred images using the learned FoE regularizer, learned by MAID and ISGD, on test images.}}
\label{fig:comparison_deblur}

\end{figure}
In this experiment, for ISGD, we use all $m=1536$ images in mini-batches of size $S=32$. By contrast, MAID requires access to the full training dataset, which in this case exceeds the available GPU memory ($8$ GB). To allow MAID's entire dataset to be stored in memory, we only use the first m=256 images from the full dataset for MAID. Hence in this setting the data sub-sampling in ISGD allows it to use a much larger training dataset than MAID.

As shown in \cref{fig:test_psnr}, ISGD with $\alpha_0 = 10^{-4}$ and decreasing step size achieved a higher average PSNR on 32 test images after each epoch of training compared to MAID with $\alpha_0 = 10^{-3}$. This demonstrates ISGD’s ability to handle larger datasets, leading to better generalization. A quantitative comparison of the deblurred test image using the learned regularizer from MAID and ISGD is provided in \cref{fig:comparison_deblur}, where ISGD achieved a higher PSNR.
\section{Conclusions and future work}
In this work, we exploited the connection between  inexact stochastic hypergradients in bilevel problems with upper-level stochasticity on data and the practical assumptions of biased SGD. We proposed the first convergence proof for this framework under practical assumptions. Our numerical results showcase the superiority of this approach in terms of convergence speed compared to state-of-the-art deterministic bilevel learning algorithms. Moreover, it leads to a better  generalization, driven by its scalability to larger datasets. 

In the future, as seen in \cref{sec:numeric}, it would be worthy to extend our analysis to include decaying and adaptive step sizes, as well as exploring adaptive line search methods to determine suitable step sizes automatically, potentially enhancing both performance and robustness to the initial choice of the step size. 
\section*{Acknowledgments}
The work of Mohammad Sadegh Salehi was supported by a scholarship from the EPSRC Centre for
Doctoral Training in Statistical Applied Mathematics at Bath (SAMBa), under the project EP/S022945/1. Matthias J. Ehrhardt acknowledges support from the EPSRC (EP/S026045/1, EP/T026693/1, EP/V026259/1). Lindon Roberts is supported by the Australian Research Council Discovery Early Career Award DE240100006.
%
%
%
%
%
%
\bibliographystyle{splncs04}
\bibliography{mybibliography}
%




\end{document}